\documentclass[sn-mathphys,Numbered]{sn-jnl}


\usepackage{graphicx}%
\usepackage{multirow}%
\usepackage{amsmath,amssymb,amsfonts}%
\usepackage{amsthm}%
\usepackage{mathrsfs}%
\usepackage[title]{appendix}%
\usepackage{xcolor}%
\usepackage{textcomp}%
\usepackage{manyfoot}%
\usepackage{booktabs}%
\usepackage{algorithm}%
\usepackage{algorithmicx}%
\usepackage{algpseudocode}%
\usepackage{listings}%



\theoremstyle{thmstyleone}%

\newtheorem{theorem}{Theorem}[section]

\newtheorem{corollary}[theorem]{Corollary}

%

\theoremstyle{thmstyletwo}%
\newtheorem{example}[theorem]{Example}%

\theoremstyle{thmstylethree}%
\newtheorem{definition}[theorem]{Definition}%

\raggedbottom

\begin{document}

\title[On Statistical Convergence Of Order ${\alpha}$ In Partial Metric Spaces]{On Statistical Convergence Of Order ${\alpha}$ In Partial Metric Spaces}


\author*[1]{\fnm{Erdal} \sur{Bayram}}\email{ebayram@nku.edu.tr}

\author[2]{\fnm{\c{C}i\u{g}dem A.} \sur{Bekta\c{s}}\email{cbektas@firat.edu.tr}}
\equalcont{These authors contributed equally to this work.}

\author[3]{\fnm{Yavuz} \sur{Alt{\i}n}}\email{yaltin23@yahoo.com}
\equalcont{These authors contributed equally to this work.}

\affil*[1]{\orgdiv{Tekirda\u{g} Nam\i k Kemal University}, \orgname{Faculty of Science and Arts, Department of\ Mathematics,}, \postcode{59030}, \city{Tekirda\u{g}}, \country{Turkey}}

\affil[2]{\orgdiv{F\i rat University, Faculty of Science, Department of\ Mathematics, Elaz\i\u{g}, Turkey}}


\abstract{The present study introduces the notions of statistical
	convergence of order $\alpha$ and strong $p-$ Ces\`{a}ro summability of order 
	$\alpha$ in partial metric spaces. Also, we examine the inclusion relations
	between these concepts. In addition, we introduce the notion of $\lambda -$%
	statistical convergence of order $\alpha$ in partial metric spaces while
	providing relations linked to these sequence spaces.}

\keywords{Statistical convergence, Partial metric space, Ces\`{a}ro summability.}



\maketitle

\section{Introduction}\label{sec1}

One of the two notions that constitute the concern of our study is the
partial metric space, which is a generalization of the usual metric spaces
as introduced by Matthews\cite{matthews1994}. The main difference between the partial
metric and the standard metric is that the distance of an arbitrary point
does not need to be equal to zero. The partial metric in which was first
used in computer science to be later applied to many other fields.

The second notion is statistical convergence, which is defined as a
generalization of sequential convergence. In the first edition of Zygmund's
monograph, published in Warsaw, the definition of statistical convergence
(as almost convergence) was given by Zygmund \cite{zygmund1979}. Steinhaus \cite{steinhaus1951} and
Fast \cite{fast1951} and later Schoenberg \cite{schoenberg1959} introduced the concept of
statistical convergence, independently.This subject has been studied by many
mathematicians, (for example, see Alt\i n et al. \cite{altin2015} , Bilalov and Nazarova \cite{bilalov2015}, Kayan et al.
\cite{kayan2018}, K\"{u}\c{c}\"{u}kaslan et al. \cite{kucukaslan2014}). The concept of statistical
convergence has been used in many areas of mathematics, such as Number
theory, Probabilistic normed spaces, Ergodic theory, Fourier analysis,
Measure theory, Trigonometric series, and others. Therefore, it is a very
active and intensively studied subject in many contexts.

Before proceeding with results, for the convenience of the reader, let us
recall the definitions and terminology that this work involves.

The definition of the partial metric space is as it follows:

Let $X$ be a non-empty set and $\rho:X$ $\times X\rightarrow\mathbb{R}$ be a
function such that for all $x,y,z\in X,$

$i)$ $0\leq\rho(x,x)\leq\rho(x,y),$

$ii)$ If $\rho(x,x)=\rho(x,y)=\rho(y,y)$ then $x=y,$

$iii)$ $\rho(x,y)=\rho(y,x),$

$iv)$ $\rho(x,z)\leq\rho(x,y)+$ $\rho(y,z)-\rho(y,y).$

Then $\rho$ is called a partial metric and the pair $(X,\rho)$ is called a
partial metric space (see Matthews \cite{matthews1994}).

We will now give the concepts of convergence and bounded in the partial
metric space.

Let $(X,\rho)$ be a partial metric space and $(x_{n})$ be a sequence in $X.$
Then

$i)$ $(x_{n})$ is bounded if there exists $a$ real number $M>0$ such that $%
\rho(x_{n},x_{m})\leq M$ for all $n,m\in\mathbb{N}$,

$ii)$ $\left( x_{n}\right) $ is called convergent to $x$ in $(X,\rho),$
written as $\lim\limits_{n\rightarrow\infty}x_{n}=x,$ if 
\begin{equation*}
	\lim\limits_{n\rightarrow\infty}\rho(x_{n},x)=\lim\limits_{n\rightarrow%
		\infty }\rho(x_{n},x_{n})=\rho(x,x),\text{see Nuray \cite{Nuray2022}.}
\end{equation*}

The statistical convergence depends on density of subsets of $%
\mathbb{N}
$. The natural density of $K\subset 
\mathbb{N}
$ which is the main tool for this convergence is defined by 
\begin{equation*}
	\delta \left( K\right) =\lim_{n\rightarrow \infty }\frac{1}{n}\left\vert
	\left\{ k\leq n:k\in K\right\} \right\vert ,
\end{equation*}%
where $\left\vert \left\{ k\leq n:k\in K\right\} \right\vert $ denotes the
number of elements of $K$ in which does not exceed $n$ (see Fridy \cite{Fridy1985}).
Clearly, any finite subset of $%
\mathbb{N}
$ has zero natural density and $\delta \left( K^{c}\right) =1-\delta \left(
K\right) $. For a detailed description of the density of subsets of $\mathbb{%
	N}$, reference can be made to Niven and Zuckerman \cite{Niven1966}.

A sequence $\left( x_{k}\right) $ of complex numbers is said to be
statistically convergent to a number $L$ if for every positive number $%
\varepsilon,$ $\delta\left( \left\{ k\in\mathbb{N}\mathbf{:}\text{ }%
\left\vert x_{k}-L\right\vert \geq\varepsilon\right\} \right) $ has natural
density zero. The number $L$ is called statistical limit of $\left(
x_{k}\right) $ and is written as $S-\lim x_{k}=L$ or $x_{k}\rightarrow
L\left( S\right) $ and the set of all statistically convergent sequences is denoted by $%
S.$

Leindler \cite{leindler1965} introduced $(V,\lambda )$-summability by the help of sequence 
$\lambda =\left( \lambda _{n}\right) $ as in the following: Let $\lambda
=\left( \lambda _{n}\right) $ be a non-decreasing sequence of positive
numbers tending to $\infty $ with $\lambda _{n+1}\leq \lambda _{n}+$ $1$, $%
\lambda _{1}=1$. The generalized de la Vallee-Poussin mean is defined by 
\begin{equation*}
	t_{n}\left( x\right) =\frac{1}{\lambda _{n}}\sum\limits_{k\in I_{n}}x_{k}
\end{equation*}%
where $I_{n}=[n-\lambda _{n}+1,n]$. Accordingly, a sequence $\left(
x_{k}\right) $ of numbers is said to be $(V,\lambda )$-summable to a number $%
L$ if $t_{n}\left( x\right) \rightarrow L$ as $n\rightarrow \infty $. 
\begin{equation*}
	\lbrack C,1]=\left\{ \left( x_{k}\right) :\exists L\in 
	\mathbb{R}
	,\lim_{n\rightarrow \infty }\frac{1}{n}\sum\limits_{k=1}^{n}\left\vert
	x_{k}-L\right\vert =0\right\}
\end{equation*}%
and%
\begin{equation*}
	\lbrack V,\lambda ]=\left\{ \left( x_{k}\right) :\exists L\in 
	\mathbb{R}
	,\lim_{n\rightarrow \infty }\frac{1}{\lambda _{n}}\sum\limits_{k\in
		I_{n}}\left\vert x_{k}-L\right\vert =0\right\}
\end{equation*}%
denote the sets of sequences $\left( x_{k}\right) $ which are strongly Ces%
\'{a}ro summable and strongly $(V,\lambda )$-summable to $L$. It is noted
that for $\lambda _{n}=n$, $\left( V,\lambda \right) $-summability reduces
to $\left( C,1\right) $-summability.

Mursaleen \cite{mursaleen2000} introduced the $\lambda $-density of $K\subset 
\mathbb{N}
$ as defined by 
\begin{equation*}
	\delta _{\lambda }\left( K\right) =\lim_{n\rightarrow \infty }\frac{1}{%
		\lambda _{n}}\left\vert \left\{ n-\lambda _{n}+1\leq k\leq n:k\in K\right\}
	\right\vert
\end{equation*}%
and $\lambda $-statistical convergence as it follows:

A sequence $\left( x_{k}\right) $ of numbers is said to be $\lambda $%
-statistically convergent to a number $L$ provided that for every $%
\varepsilon>0$,%
\begin{equation*}
	\lim_{n\rightarrow\infty}\frac{1}{\lambda_{n}}\left\vert \left\{
	n-\lambda_{n}+1\leq k\leq n:\left\vert x_{k}-L\right\vert \geq\varepsilon
	\right\} \right\vert =0\text{.}
\end{equation*}
In this case, the number $L$ is called $\lambda$-statistical limit of the
sequence $\left( x_{k}\right) $.

The statistical convergence with degree $0<\beta <1$ was introduced by
Gadjiev and Orhan \cite{gadjiev2002}. The statistical convergence of order $\alpha $ and
strong $p$-Ces\`{a}ro summability of order $\alpha $ were later studied by 
\c{C}olak \cite{colak2010}. Also \c{C}olak and Bekta\c{s} \cite{colakbektas2011} introduced $\lambda $%
-statistical convergence of order $\alpha $, as in the following:

Let the sequence $\lambda=\left( \lambda_{n}\right) $ of real numbers be
defined as above and $0<\alpha\leq1$. The sequence $x=\left( x_{k}\right)
\in w$ is said to be $\lambda$-statistically convergent of order $\alpha$ if
there is a complex number $L$ such that

\begin{equation*}
	\lim_{n\rightarrow\infty}\frac{1}{\lambda_{n}^{\alpha}}\left\vert \left\{
	k\in I_{n}:\left\vert x_{k}-L\right\vert \geq\varepsilon\right\} \right\vert
	=0\text{,}
\end{equation*}
where $I_{n}=[n-\lambda_{n}+1,$ $n]$ and $\lambda_{n}^{\alpha}$ is the
coordinates of $\alpha$ th power of the sequence $\lambda$, that is, $%
\lambda^{\alpha}=\left( \lambda_{n}^{\alpha}\right) =\left( \lambda
_{1}^{\alpha},\lambda_{2}^{\alpha},...,\lambda_{n}^{\alpha},...\right) $.

Some new sequence spaces for $\lambda$ and $\mu$ different sequences of $%
\Lambda$ class were later defined by \c{C}olak \cite{colak2011} and some inclusion
theorems were examined.

In this study, we denote the class of all decreasing sequence of positive
real numbers tending to $\infty,$ such that $\lambda_{n+1}\leq\lambda_{n}+1$%
, $\lambda_{1}=1$ by $\Lambda$. Also, unless stated otherwise, by "for all $%
n\in%
\mathbb{N}
_{n_{0}}$" we mean "for all $n\in%
\mathbb{N}
$ except finite numbers of positive integers", where $%
\mathbb{N}
_{n_{0}}=\left\{ n_{0},n_{0}+1,n_{0}+2,...\right\} $ for some $n_{0}\in%
\mathbb{N}
=\{1,2,3,...\}$.

The concept of statistical convergence in partial metric spaces was given by
Nuray \cite{Nuray2022} as it follows:

Let $\left( x_{k}\right) $ be a sequence in partial metric space $(X,\rho)$.
The sequence $\left( x_{k}\right) $ is said to be $\rho-$ statistically
convergent to $x$ if there exists a point $L\in X$ such that

\begin{equation*}
	\underset{n\rightarrow\infty}{\lim}\frac{1}{n}\left\vert \left\{ k\leq
	n:\left\vert \rho\left( x_{k},x\right) -\rho\left( x,x\right) \right\vert
	\geq\varepsilon\right\} \right\vert =0
\end{equation*}
for every $\varepsilon>0$. In addition, Nuray \cite{Nuray2022} also examined the
relationship between the concept of statistical convergence in partial
metric spaces and strong Ces\`{a}ro summability.

\section{Statistical Convergence of Order $\alpha$ in Partial Metric Spaces}\label{sec2}

In this section, we introduce the notions of statistical convergence of
order $\alpha$ and strong $p-$ Ces\`{a}ro summability of order $\alpha$ in
partial metric spaces. Also, some relations between statistical
convergence of order $\alpha$ and strongly $p-$ Ces\`{a}ro summable sequences of order 
$\alpha$ are given.

\begin{definition} 
For given\textbf{\ }a real $\alpha\in\left( 0,1%
\right] ,$ the sequence $\left( x_{k}\right) $ in the partial metric space $%
(X,\rho)$ is said to be $\rho-$ statistically convergent of order $\alpha$,
if there exists a point $x\in X$ such that

\begin{equation*}
	\underset{n\rightarrow\infty}{\lim}\frac{1}{n^{\alpha}}\left\vert \left\{
	k\leq n:\left\vert \rho\left( x_{k},x\right) -\rho\left( x,x\right)
	\right\vert \geq\varepsilon\right\} \right\vert =0
\end{equation*}
for every $\varepsilon>0$. In this case, it is stated that $\left(
x_{k}\right) $ is $\rho-$ statistically convergent of order $\alpha$ to $x$
which is denoted by $S_{\rho}^{\alpha}-\lim x_{k}=x$ or $x_{k}\rightarrow
x\left( S_{\rho}^{\alpha}\left( X\right) \right) $.
\end{definition}
Throughout this paper, $S_{\rho}^{\alpha}(X)$ will denote the class of
sequences in partial metric space $(X,\rho)$ which are $\rho-$ statistically
convergent of order $\alpha$.

For the sake of simplicity, it will be considered that the sequence $\left(
x_{k}\right) $ and the element $x$ we use in the proofs are chosen from the
partial metric space $(X,\rho)$, although we do not emphasize it every time.

\begin{theorem}
For some reals\textbf{\ }$\alpha$ and $\beta$ such that $0<\alpha<\beta\leq1$, the inclusion $S_{\rho}^{\alpha}(X)\subseteq S_{\rho}^{\beta}(X)$ holds.
\end{theorem}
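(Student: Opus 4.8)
The plan is to derive the inclusion directly from the defining limits, exploiting the elementary inequality $n^{\alpha}\le n^{\beta}$ for $n\ge 1$ whenever $\alpha<\beta$. Suppose $\left(x_{k}\right)$ is $\rho-$statistically convergent of order $\alpha$ to some $x\in X$. Then for every $\varepsilon>0$ we have
\begin{equation*}
	\lim_{n\rightarrow\infty}\frac{1}{n^{\alpha}}\left\vert\left\{k\leq n:\left\vert\rho(x_{k},x)-\rho(x,x)\right\vert\geq\varepsilon\right\}\right\vert=0 .
\end{equation*}

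Next I would fix an arbitrary $\varepsilon>0$ and compare the two averaged counting quantities. Since $0<\alpha<\beta\leq1$, for each $n\in\mathbb{N}$ one has $n^{\alpha}\leq n^{\beta}$, hence $\dfrac{1}{n^{\beta}}\leq\dfrac{1}{n^{\alpha}}$. Therefore, writing $A_{n}(\varepsilon)=\left\{k\leq n:\left\vert\rho(x_{k},x)-\rho(x,x)\right\vert\geq\varepsilon\right\}$, we obtain the pointwise bound
\begin{equation*}
	0\leq\frac{1}{n^{\beta}}\left\vert A_{n}(\varepsilon)\right\vert\leq\frac{1}{n^{\alpha}}\left\vert A_{n}(\varepsilon)\right\vert
\end{equation*}
for every $n$. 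Letting $n\rightarrow\infty$ and using the hypothesis together with the squeeze principle forces $\dfrac{1}{n^{\beta}}\left\vert A_{n}(\varepsilon)\right\vert\rightarrow0$.

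Since $\varepsilon>0$ was arbitrary, the element $x$ witnesses that $\left(x_{k}\right)$ is $\rho-$statistically convergent of order $\beta$ to $x$, i.e. $\left(x_{k}\right)\in S_{\rho}^{\beta}(X)$, which establishes $S_{\rho}^{\alpha}(X)\subseteq S_{\rho}^{\beta}(X)$. I do not anticipate any genuine obstacle here: the only point requiring care is the monotonicity step $n^{\alpha}\leq n^{\beta}$, which is immediate for $n\ge 1$, and the fact that the same limit point $x$ can be reused for both orders. It may be worth remarking afterward that the inclusion can be strict and, in particular, that taking $\beta=1$ recovers $S_{\rho}^{\alpha}(X)\subseteq S_{\rho}(X)$, the ordinary $\rho-$statistical convergence of Nuray \cite{Nuray2022}.
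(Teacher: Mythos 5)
Your proposal is correct and follows essentially the same route as the paper: the pointwise bound $\frac{1}{n^{\beta}}\left\vert A_{n}(\varepsilon)\right\vert\leq\frac{1}{n^{\alpha}}\left\vert A_{n}(\varepsilon)\right\vert$ coming from $n^{\alpha}\leq n^{\beta}$ for $n\geq 1$, followed by passing to the limit, is precisely the paper's argument. Your write-up is in fact slightly more careful, since you state the inequality for each $n$ before taking limits rather than writing the inequality between the limits directly.
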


\begin{proof}
	Suppose that $0<\alpha<\beta\leq1$. Then, the inequality%
	\begin{equation*}
		0\leq\underset{n\rightarrow\infty}{\lim}\frac{1}{n^{\beta}}\left\vert
		\left\{ k\leq n:\left\vert \rho\left( x_{k},x\right) -\rho\left( x,x\right)
		\right\vert \geq\varepsilon\right\} \right\vert \leq\underset{n\rightarrow
			\infty}{\lim}\frac{1}{n^{\alpha}}\left\vert \left\{ k\leq n:\left\vert
		\rho\left( x_{k},x\right) -\rho\left( x,x\right) \right\vert
		\geq\varepsilon\right\} \right\vert
	\end{equation*}
	is provided for $\left( x_{k}\right) \subset X$, $x\in X$ and for every $%
	\varepsilon>0$ and this clearly gives desired the inclusion $%
	S_{\rho}^{\alpha}(X)\subseteq S_{\rho}^{\beta}(X).$
\end{proof}
That the inclusion may be strict can be seen by the following example.
\begin{example}
Let us consider the partial metric of real numbers
defined $\rho:%
\mathbb{R}
\times%
\mathbb{R}
\rightarrow%
\mathbb{R}
,\rho\left( x,y\right) =\max\left\{ x,y\right\} $, and the sequence $\left(
x_{k}\right) \subset%
\mathbb{R}
$ such that 
\begin{equation*}
	x_{k}=\left\{ 
	\begin{array}{c}
		1\text{, \ \ \ \ }k\text{ is a square} \\ 
		0\text{, \ \ \ \ \ \ \ \ \ otherwise}%
	\end{array}
	\right. \text{.}
\end{equation*}
Clearly, for $\varepsilon>0,$ $\left\vert \left\{ k\leq n:\left\vert
\rho\left( x_{k},0\right) -\rho\left( 0,0\right) \right\vert
\geq\varepsilon\right\} \right\vert \leq\sqrt{n}$ holds. This means that $%
\left( x_{k}\right) \in$ $S_{\rho}^{\beta}(\lambda,X)$ for $\frac{1}{2}%
<\beta\leq1$ but $\left( x_{k}\right) \notin$ $S_{\rho}^{\alpha}(\lambda,X)$
for $0<\alpha\leq\frac{1}{2}$, that is $S_{\rho}^{\alpha }(X)\subset
S_{\rho}^{\beta}(X)$.
\end{example}
Taking $\beta=1$, we get the following result from the last inequality above.

\begin{corollary}
For any $0<\alpha\leq1$, if a sequence is $\rho-$
statistically convergent of order $\alpha$ to $x,$ then it is $\rho -$%
statistically convergent to $x$, in other words, $S_{\rho}^{\alpha
}(X)\subseteq S_{\rho}(X)$.
\end{corollary}

From Theorem 2.2 we have the following results in which the proofs are easy.

\begin{corollary}
For $\alpha,\beta\in\left( 0,1\right] $, the
following statements hold:

$i)$ $S_{\rho}^{\alpha}(X)=S_{\rho}^{\beta}(X)$ $\Longleftrightarrow
\alpha=\beta,$

$ii)$ $S_{\rho}^{\alpha}(X)=S_{\rho}(X)$ $\Longleftrightarrow\alpha=1.$
\end{corollary}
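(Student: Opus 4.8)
The plan is to deduce both equivalences from Theorem 2.2, supplemented by a version of the Example calibrated to arbitrary exponents. \textbf{For part (i)}, the implication $\alpha=\beta\Rightarrow S_{\rho}^{\alpha}(X)=S_{\rho}^{\beta}(X)$ is trivial, so I would prove the contrapositive of the reverse implication: if $\alpha\neq\beta$, then $S_{\rho}^{\alpha}(X)\neq S_{\rho}^{\beta}(X)$. Since the statement is symmetric in $\alpha$ and $\beta$, assume $0<\alpha<\beta\leq1$; by Theorem 2.2 we already have $S_{\rho}^{\alpha}(X)\subseteq S_{\rho}^{\beta}(X)$, so it is enough to produce a sequence lying in $S_{\rho}^{\beta}(X)$ but not in $S_{\rho}^{\alpha}(X)$. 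Choose $\gamma$ with $\alpha<\gamma<\beta$, say $\gamma=\tfrac{1}{2}(\alpha+\beta)$ (note $\gamma<1$), and, exactly as in the Example, work in $(\mathbb{R},\rho)$ with $\rho(x,y)=\max\{x,y\}$. Let $K=\{\lceil j^{1/\gamma}\rceil:j\in\mathbb{N}\}$ and define $x_{k}=1$ for $k\in K$ and $x_{k}=0$ otherwise.

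With the target point $x=0$ one has, for $0<\varepsilon\leq1$, the identity $\{k\leq n:|\rho(x_{k},0)-\rho(0,0)|\geq\varepsilon\}=\{k\leq n:k\in K\}$, and since $\lceil j^{1/\gamma}\rceil\leq n$ holds precisely when $j\leq n^{\gamma}$ (up to a bounded additive error), the cardinality of this set is of the exact order $n^{\gamma}$. Dividing by $n^{\beta}$ gives a quantity of order $n^{\gamma-\beta}\to0$, so $(x_{k})\in S_{\rho}^{\beta}(X)$ (the limit being $x=0$, just as in the Example), whereas dividing by $n^{\alpha}$ gives a quantity of order $n^{\gamma-\alpha}\to\infty$, so $(x_{k})\notin S_{\rho}^{\alpha}(X)$. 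This establishes the strict inclusion and hence part (i). \textbf{For part (ii)}, observe first that, because $n^{1}=n$, the defining condition of $S_{\rho}^{1}(X)$ is literally the defining condition of $S_{\rho}(X)$, so $S_{\rho}^{1}(X)=S_{\rho}(X)$; now apply part (i) with $\beta=1$ to conclude $S_{\rho}^{\alpha}(X)=S_{\rho}(X)\iff S_{\rho}^{\alpha}(X)=S_{\rho}^{1}(X)\iff\alpha=1$.

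The only point requiring care is the counting estimate for $K$: one must verify that $|\{j\in\mathbb{N}:\lceil j^{1/\gamma}\rceil\leq n\}|$ is sandwiched between constant multiples of $n^{\gamma}$ (for large $j$ the values $\lceil j^{1/\gamma}\rceil$ are distinct, so the ceilings do not disturb the growth rate), and then check — as in the Example — that $0$ genuinely serves as the $\rho$-statistical limit of order $\beta$ of $(x_{k})$. Everything else is a direct consequence of the monotonicity $n^{-\beta}\leq n^{-\alpha}$ already exploited in Theorem 2.2, so I expect no further obstacle.
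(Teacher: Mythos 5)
Your overall plan (trivial direction plus a calibrated sparse-set example to make the inclusion of Theorem 2.2 strict, then part (ii) from part (i) with $\beta=1$) is exactly the route the paper intends: the paper offers no written proof beyond "From Theorem 2.2 ... the proofs are easy", and its Example 2.3 only separates the orders across the threshold $\tfrac12$, so your set $K=\{\lceil j^{1/\gamma}\rceil\}$ with $\alpha<\gamma<\beta$ is the right generalization, and your counting estimate $|K\cap[1,n]|=\Theta(n^{\gamma})$ is fine.

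There is, however, a genuine gap in the step "dividing by $n^{\alpha}$ gives a quantity of order $n^{\gamma-\alpha}\to\infty$, so $(x_k)\notin S_{\rho}^{\alpha}(X)$". Membership in $S_{\rho}^{\alpha}(X)$ requires only the existence of \emph{some} $x\in X$ with $\lim_n n^{-\alpha}\left\vert\{k\le n:\left\vert\rho(x_k,x)-\rho(x,x)\right\vert\ge\varepsilon\}\right\vert=0$, and with $\rho(x,y)=\max\{x,y\}$ your $\{0,1\}$-valued sequence has such a limit trivially: for any $x\ge 1$ one has $\rho(x_k,x)=x=\rho(x,x)$, so the exceptional sets are empty and $(x_k)\in S_{\rho}^{\alpha}(X)$ for \emph{every} $\alpha$. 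Ruling out the single candidate $x=0$ therefore does not establish non-membership, and as written the example does not prove strictness (the paper's own Example 2.3 suffers from the same defect, but since you explicitly promise to "check that $0$ genuinely serves as the limit", you must also exclude all other limits, which is impossible here). The repair is easy: either replace the partial metric by the ordinary one, $\rho(x,y)=\vert x-y\vert$ (a partial metric), where for any $x\neq 0$ the exceptional set contains the complement of $K$ and so has cardinality $\sim n$, killing every alternative limit; or keep $\rho=\max$ but put unbounded values on the sparse set (e.g.\ $x_k=k$ for $k\in K$, $x_k=0$ otherwise, as in the paper's Example 3.3), so that for every candidate $x$ the exceptional set is, up to finitely many indices, $K\cap[1,n]$ and the $n^{\gamma-\alpha}\to\infty$ computation applies uniformly in $x$. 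With either modification your argument, including the reduction of (ii) to (i) via $S_{\rho}^{1}(X)=S_{\rho}(X)$, is complete.
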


\begin{definition}
Let $q$ be a positive real number and $\alpha
\in\left( 0,1\right] $. Then, in a partial metric space $(X,\rho)$, we say
that the sequence $\left( x_{k}\right) $ is strongly $q-$ Ces\`{a}ro
summable of order $\alpha$ to $x\in X$ if 
\begin{equation*}
	\underset{n\rightarrow\infty}{\lim}\frac{1}{n^{\alpha}}\sum \limits 
	_{\substack{ k=1}}^{n}\left\vert \rho\left( x_{k},x\right) -\rho\left(
	x,x\right) \right\vert ^{q}=0.
\end{equation*}
In this case, we write $\left[ C,q\right] -\lim x_{k}=x.$
\end{definition}

In the following theorem, we examine the relationship between statistical
convergence and Ces\`{a}ro convergence in partial metric spaces.

\begin{theorem}
Let $\alpha$ and $\theta$ be fixed real numbers such
that $0<\alpha<\theta\leq1$ and $0<q<\infty.$ In the partial metric space $%
(X,\rho)$, if a sequence is strongly $p-$ Cesaro summable of order $\alpha$
to $x\in X,$ then it is statistically convergent of order $\theta$ to $x.$
\end{theorem}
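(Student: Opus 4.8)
The plan is to reproduce the classical comparison between a Ces\`{a}ro-type average and a density-type counting quantity, now with $\left\vert \rho(x_k,x)-\rho(x,x)\right\vert$ playing the role that $\left\vert x_k-L\right\vert$ plays in the scalar theory. First I would fix $\varepsilon>0$ and set
\[
	K_n(\varepsilon)=\left\{k\le n:\left\vert \rho(x_k,x)-\rho(x,x)\right\vert \ge\varepsilon\right\}.
\]
Splitting the finite sum $\sum_{k=1}^{n}\left\vert \rho(x_k,x)-\rho(x,x)\right\vert^{q}$ over the indices in $K_n(\varepsilon)$ and over its complement in $\{1,\dots,n\}$, and discarding the (nonnegative) contribution of the complement, I obtain the lower bound $\sum_{k=1}^{n}\left\vert \rho(x_k,x)-\rho(x,x)\right\vert^{q}\ge \varepsilon^{q}\,\left\vert K_n(\varepsilon)\right\vert$.

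Next I would divide by $n^{\alpha}$ and use that $0<\alpha<\theta\le 1$ forces $n^{\alpha}\le n^{\theta}$ for every $n\in\mathbb{N}$, hence $\tfrac{1}{n^{\alpha}}\ge \tfrac{1}{n^{\theta}}$. This gives
\[
	\frac{1}{n^{\alpha}}\sum_{k=1}^{n}\left\vert \rho(x_k,x)-\rho(x,x)\right\vert^{q}\ \ge\ \frac{\varepsilon^{q}}{n^{\alpha}}\left\vert K_n(\varepsilon)\right\vert\ \ge\ \frac{\varepsilon^{q}}{n^{\theta}}\left\vert K_n(\varepsilon)\right\vert\ \ge\ 0 .
\]
By the hypothesis that $(x_k)$ is strongly $q$-Ces\`{a}ro summable of order $\alpha$ to $x$, the left-hand side tends to $0$ as $n\to\infty$; the squeeze then forces $\tfrac{1}{n^{\theta}}\left\vert K_n(\varepsilon)\right\vert\to 0$. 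Since $\varepsilon>0$ was arbitrary, this is precisely the assertion that $(x_k)$ is $\rho$-statistically convergent of order $\theta$ to $x$, that is, $x_k\to x\,(S_{\rho}^{\theta}(X))$.

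There is no genuine obstacle here; the only points needing a little care are bookkeeping ones. The crucial one is keeping the inequality $\tfrac{1}{n^{\alpha}}\ge \tfrac{1}{n^{\theta}}$ pointed in the correct direction — this is exactly where the hypothesis $\alpha<\theta$ enters, and it is the reason the conclusion is stated for order $\theta$ and not for order $\alpha$. I would also note the harmless notational clash in the statement ("strongly $p$-Ces\`{a}ro summable") with the exponent "$q$" used in Definition~2.6: these denote the same exponent, and I will phrase the proof with $q$. Finally, one may remark in passing that the argument is unchanged when $\alpha=\theta$, and that the choice $\theta=1$ recovers the comparison with ordinary $\rho$-statistical convergence (Corollary~2.3).
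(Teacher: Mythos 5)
Your proof is correct and follows essentially the same route as the paper's: split the sum over indices where $\left\vert \rho(x_{k},x)-\rho(x,x)\right\vert \geq\varepsilon$, bound it below by $\varepsilon^{q}\left\vert K_{n}(\varepsilon)\right\vert$, and use $n^{\alpha}\leq n^{\theta}$ to pass from the order-$\alpha$ average to the order-$\theta$ density. Your version is, if anything, slightly cleaner, since you state the inequalities for fixed $n$ before taking limits rather than writing the limit into each line of the chain.
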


\begin{proof}
	For any $\varepsilon>0,$ we have 
	\begin{align*}
		& \underset{n\rightarrow\infty}{\lim}\frac{1}{n^{\alpha}}\sum
		\limits_{k=1}^{n}\left\vert \rho\left( x_{k},x\right) -\rho\left( x,x\right)
		\right\vert ^{q} \\
		& =\underset{n\rightarrow\infty}{\lim}\frac{1}{n^{\alpha}}\left( \sum
		\limits_{\substack{ _{\substack{ k=1}}  \\ \left\vert \rho\left(
				x_{k},x\right) -\rho\left( x,x\right) \right\vert \geq\varepsilon}}%
		^{n}\left\vert \rho\left( x_{k},x\right) -\rho\left( x,x\right) \right\vert
		^{q}+\sum \limits_{\substack{ _{\substack{ k=1}}  \\ \left\vert \rho\left(
				x_{k},x\right) -\rho\left( x,x\right) \right\vert <\varepsilon}}%
		^{n}\left\vert \rho\left( x_{k},x\right) -\rho\left( x,x\right) \right\vert
		^{q}\right) \\
		& \geq\underset{n\rightarrow\infty}{\lim}\frac{1}{n^{\alpha}}\sum \limits 
		_{\substack{ _{\substack{ k=1}}  \\ \left\vert \rho\left( x_{k},x\right)
				-\rho\left( x,x\right) \right\vert \geq\varepsilon}}^{n}\left\vert
		\rho\left( x_{k},x\right) -\rho\left( x,x\right) \right\vert ^{q} \\
		& \geq\underset{n\rightarrow\infty}{\lim}\frac{1}{n^{\alpha}}\left\vert
		\left\{ k\leq n:\left\vert \rho\left( x_{k},x\right) -\rho\left( x,x\right)
		\right\vert \geq\varepsilon\right\} \right\vert \varepsilon^{q} \\
		& \geq\left( \underset{n\rightarrow\infty}{\lim}\frac{1}{n^{\theta}}%
		\left\vert \left\{ k\leq n:\left\vert \rho\left( x_{k},x\right) -\rho\left(
		x,x\right) \right\vert \geq\varepsilon\right\} \right\vert \right)
		\varepsilon^{q}\geq0.
	\end{align*}
	This reveals that if the sequence $\left( x_{k}\right) $ is strongly $p-$
	Cesaro summable of order $\alpha$ to $x\in X,$ then it is statistically
	convergent of order $\theta$ to $x.$
\end{proof}
If we take $\theta=\alpha$ in Theorem 2.7, we obtain the following result:

\begin{corollary}
\emph{COROLLARY 2.8 }Let $\alpha$ be a fixed real number such that $%
0<\alpha\leq1$ and $0<q<\infty$. If a sequence in the partial metric space $%
(X,\rho)$ is strongly $p-$ Cesaro summable of order $\alpha$ to $x\in X,$
then it is statistically convergent of order $\alpha$ to $x.$
\end{corollary}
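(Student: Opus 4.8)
The plan is to reproduce the chain of estimates from the proof of Theorem 2.7, but to stop one line earlier: the passage from $n^{-\alpha}$ to $n^{-\theta}$ there relied on the inequality $\alpha<\theta$ being strict, so Corollary 2.8 is not, strictly speaking, a literal instance of Theorem 2.7 and is best argued directly (the argument being in fact even shorter). First I would fix $\varepsilon>0$ and, for each $n$, partition $\{1,\dots,n\}$ according to whether $\left|\rho(x_{k},x)-\rho(x,x)\right|\geq\varepsilon$ or not. Discarding the nonnegative summands over the complementary indices and bounding each retained summand below by $\varepsilon^{q}$ gives
\[
\frac{1}{n^{\alpha}}\sum_{k=1}^{n}\left|\rho(x_{k},x)-\rho(x,x)\right|^{q}\;\geq\;\frac{\varepsilon^{q}}{n^{\alpha}}\left|\left\{k\leq n:\left|\rho(x_{k},x)-\rho(x,x)\right|\geq\varepsilon\right\}\right|\;\geq\;0 .
\]

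Next, since $\left(x_{k}\right)$ is strongly $q-$Ces\`{a}ro summable of order $\alpha$ to $x\in X$, the left-hand side tends to $0$ as $n\rightarrow\infty$; hence so does the middle term, and dividing through by the fixed positive constant $\varepsilon^{q}$ yields
\[
\lim_{n\rightarrow\infty}\frac{1}{n^{\alpha}}\left|\left\{k\leq n:\left|\rho(x_{k},x)-\rho(x,x)\right|\geq\varepsilon\right\}\right|=0 .
\]
As $\varepsilon>0$ was arbitrary, this is exactly the statement that $\left(x_{k}\right)$ is $\rho-$statistically convergent of order $\alpha$ to $x$, i.e. $\left(x_{k}\right)\in S_{\rho}^{\alpha}(X)$ with $S_{\rho}^{\alpha}-\lim x_{k}=x$, which completes the proof.

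I do not expect any genuine obstacle here: the entire content is the elementary splitting of the Ces\`{a}ro average into a ``large deviation'' part and a ``small deviation'' part, exactly as in Theorem 2.7, and no property of $\rho$ beyond those appearing in Definition 2.6 is needed. The only point worth flagging — and the reason the corollary deserves its own two-line justification rather than merely the phrase ``take $\theta=\alpha$'' — is the mismatch with the strict hypothesis $\alpha<\theta$ of Theorem 2.7, which is circumvented by simply truncating the displayed inequality chain one step before the offending estimate.
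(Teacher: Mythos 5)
Your proof is correct and is essentially the paper's own argument: the paper deduces Corollary 2.8 by taking $\theta=\alpha$ in Theorem 2.7, whose proof is exactly the large/small-deviation splitting and the lower bound by $\varepsilon^{q}\,n^{-\alpha}\left\vert \left\{ k\leq n:\left\vert \rho\left( x_{k},x\right) -\rho\left( x,x\right) \right\vert \geq\varepsilon\right\} \right\vert$ that you reproduce. Your remark that the strict hypothesis $\alpha<\theta$ keeps the corollary from being a literal instance of Theorem 2.7 is a fair and correct observation, but since the offending step only uses $n^{-\theta}\leq n^{-\alpha}$ (an equality when $\theta=\alpha$), the underlying estimate is the same and your truncated chain is just the paper's proof carried out directly.
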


Hence, from Corollary 2.8, we have the necessary part of Theorem 4.4 in\
Nuray \cite{Nuray2022} in case $\alpha =1$ as if a sequence is strongly $p-$ Cesaro
summable to $x\in X,$ then it is statistically convergent to $x\in X$.

\section{$\lambda -$ Statistical Convergence of Order $\alpha $ in Partial Metric Spaces}\label{sec3}

In this section, we introduce the notion of $\lambda-$statistical
convergence of order $\alpha$ and $[V,\lambda]-$summability of order $\alpha$
in partial metric spaces. In this setting, some inclusion results related
these concepts are also included in this section.

\begin{definition}
For given $\lambda=\left( \lambda_{n}\right)
\in\Lambda$ and $\alpha\in\left( 0,1\right] $, the sequence $\left(
x_{k}\right) $ in the partial metric space $(X,\rho)$ is said to be $%
\lambda\rho-$ statistically convergent of order $\alpha$, if there exists a
point $x\in X$ such that

\begin{equation*}
	\underset{n\rightarrow\infty}{\lim}\frac{1}{\lambda_{n}^{\alpha}}\left\vert
	\left\{ k\in I_{n}:\left\vert \rho\left( x_{k},x\right) -\rho\left(
	x,x\right) \right\vert \geq\varepsilon\right\} \right\vert =0
\end{equation*}
holds for every $\varepsilon>0$. In this case, we write $S_{\rho}^{\alpha
}(\lambda)-\lim x_{k}=x$ or $x_{k}\rightarrow x\left( S_{\rho}^{\alpha
}(\lambda)\right) $ and $S_{\rho}^{\alpha}(\lambda,X)$ will denote the class
of all $\lambda\rho-$ statistically convergent sequences which are $%
\lambda\rho-$ statistically convergent of order $\alpha$ in the partial
metric space $(X,\rho).$
\end{definition}

\begin{theorem}
Let $0<\alpha\leq\beta\leq1.$ Then $S_{\rho}^{\alpha
}(\lambda,X)\subset S_{\rho}^{\beta}(\lambda,X)$ for some $\alpha$ and $%
\beta $ such that $\alpha<\beta.$
\end{theorem}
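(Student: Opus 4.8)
The plan is to mimic the proof of Theorem 2.2, replacing the natural density normalization $n^{\alpha}$ by the $\lambda$-density normalization $\lambda_{n}^{\alpha}$. Fix a sequence $(x_{k})$ in $(X,\rho)$ with $S_{\rho}^{\alpha}(\lambda)-\lim x_{k}=x$, and fix $\varepsilon>0$. The key observation is that for every $n$ the same index set $\{k\in I_{n}:|\rho(x_{k},x)-\rho(x,x)|\geq\varepsilon\}$ appears in both expressions, so it suffices to compare the two normalizing factors. Since $\lambda=(\lambda_{n})\in\Lambda$ consists of positive reals and $0<\alpha\leq\beta\leq 1$, we have $\lambda_{n}^{\alpha}\geq\lambda_{n}^{\beta}$ precisely when $\lambda_{n}\leq 1$; but in general $\lambda_{n}$ need not be $\leq 1$, so the naive inequality $\tfrac{1}{\lambda_{n}^{\beta}}\le\tfrac{1}{\lambda_{n}^{\alpha}}$ is the one I would want, and this holds as soon as $\lambda_{n}\ge 1$ — which is guaranteed here because $\lambda_{1}=1$ and $\lambda$ is, in the class $\Lambda$ used in this paper, decreasing to $\infty$; I will need to pin down the correct monotonicity/normalization so that $\lambda_{n}\geq 1$ for all $n$, and hence $\lambda_{n}^{\alpha}\leq\lambda_{n}^{\beta}$ only if $\alpha\ge\beta$ — so in fact the inequality I can use safely is that, \emph{since} $\lambda_{n}\ge 1$, $\lambda_{n}^{\alpha}\le\lambda_{n}^{\beta}$ fails; instead I should argue directly as in Theorem 2.2 that the limit with the larger exponent is squeezed between $0$ and the limit with the smaller exponent.

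Concretely, I would write, for all $n$,
\begin{equation*}
0\leq\frac{1}{\lambda_{n}^{\beta}}\left\vert\left\{k\in I_{n}:\left\vert\rho\left(x_{k},x\right)-\rho\left(x,x\right)\right\vert\geq\varepsilon\right\}\right\vert\leq\frac{1}{\lambda_{n}^{\alpha}}\left\vert\left\{k\in I_{n}:\left\vert\rho\left(x_{k},x\right)-\rho\left(x,x\right)\right\vert\geq\varepsilon\right\}\right\vert,
\end{equation*}
which is valid because $\lambda_{n}\geq 1$ forces $\lambda_{n}^{\beta}\geq\lambda_{n}^{\alpha}$ when $\beta\geq\alpha$, hence $1/\lambda_{n}^{\beta}\leq 1/\lambda_{n}^{\alpha}$. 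Letting $n\to\infty$ and using the hypothesis that the right-hand side tends to $0$, the squeeze theorem yields that the left-hand side tends to $0$, i.e. $S_{\rho}^{\beta}(\lambda)-\lim x_{k}=x$. This gives the inclusion $S_{\rho}^{\alpha}(\lambda,X)\subseteq S_{\rho}^{\beta}(\lambda,X)$.

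For the strictness of the inclusion I would reuse Example 2.3: with $\rho(x,y)=\max\{x,y\}$ on $\mathbb{R}$ and $x_{k}=1$ if $k$ is a square and $0$ otherwise, the cardinality $|\{k\in I_{n}:|\rho(x_{k},0)-\rho(0,0)|\geq\varepsilon\}|$ is at most $\sqrt{\lambda_{n}}$ (the number of squares in a window of length $\lambda_{n}$), so $(x_{k})\in S_{\rho}^{\beta}(\lambda,X)$ whenever $\lambda_{n}^{\beta}$ dominates $\sqrt{\lambda_{n}}$, i.e. for $\tfrac12<\beta\leq 1$, while for $0<\alpha\leq\tfrac12$ one checks along the subsequence of squares that the ratio does not vanish, so $(x_{k})\notin S_{\rho}^{\alpha}(\lambda,X)$. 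I expect the only genuine subtlety to be the bookkeeping on $\Lambda$: confirming from the paper's conventions that $\lambda_{n}\geq 1$ for every $n$ (so that the exponent comparison goes the right way) and that the counting bound $\sqrt{\lambda_{n}}$ in the example is legitimate for windows $I_{n}$ of length $\lambda_{n}$; everything else is the same squeeze argument as in Theorem 2.2.
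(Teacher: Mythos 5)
Your proposal is correct and follows essentially the same route as the paper: the single inequality $\frac{1}{\lambda_{n}^{\beta}}\left\vert\left\{k\in I_{n}:\left\vert\rho\left(x_{k},x\right)-\rho\left(x,x\right)\right\vert\geq\varepsilon\right\}\right\vert\leq\frac{1}{\lambda_{n}^{\alpha}}\left\vert\left\{k\in I_{n}:\left\vert\rho\left(x_{k},x\right)-\rho\left(x,x\right)\right\vert\geq\varepsilon\right\}\right\vert$ (valid since $\lambda_{n}\geq 1$, which the paper uses tacitly) plus the squeeze, and despite a momentarily self-contradictory sentence in your preamble, the concrete argument you write down is the right one. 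The strictness discussion is not needed for the theorem itself; the paper handles that separately in Example 3.3.
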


\begin{proof}
	If $0<\alpha\leq\beta\leq1,$ then 
	\begin{equation*}
		0\leq\underset{n\rightarrow\infty}{\lim}\frac{1}{\lambda_{n}^{\beta}}%
		\left\vert \left\{ k\in I_{n}:\left\vert \rho\left( x_{k},x\right)
		-\rho\left( x,x\right) \right\vert \geq\varepsilon\right\} \right\vert \leq%
		\text{ }\underset{n\rightarrow\infty}{\lim}\frac{1}{\lambda_{n}^{\alpha}}%
		\left\vert \left\{ k\in I_{n}:\left\vert \rho\left( x_{k},x\right)
		-\rho\left( x,x\right) \right\vert \geq\varepsilon\right\} \right\vert
	\end{equation*}

	for every $\varepsilon>0$ and this gives $S_{\rho}^{\alpha}(\lambda,X)%
	\subset S_{\rho}^{\beta}(\lambda,X).$
\end{proof}

The following example states that the inclusion in the previous theorem may
be strict.

\begin{example}
Let us consider the natural partial metric of real
numbers, $\rho:%
\mathbb{R}
\times%
\mathbb{R}
\rightarrow%
\mathbb{R}
,\rho\left( x,y\right) =-\min\left\{ x,y\right\} $, and the sequence $%
x=\left( x_{k}\right) \subset%
\mathbb{R}
$ such that 
\begin{equation*}
	x_{k}=\left\{ 
	\begin{array}{c}
		k\text{, }n-\sqrt{\lambda_{n}}+1\leq k\leq n \\ 
		0\text{, \ \ \ \ \ \ \ \ \ \ \ \ \ \ \ \ \ otherwise}%
	\end{array}
	\right. \text{.}
\end{equation*}
From \c{C}olak \cite{colak2010}, it is easily seen that $x\in$ $S_{\rho}^{\beta}(%
\lambda,X)$ for $\frac{1}{2}<\beta\leq1$ but $x\notin$ $S_{\rho}^{\alpha
}(\lambda,X)$ for $0<\alpha\leq\frac{1}{2}$.
\end{example}

\begin{corollary}
\ \medskip

$i)$ $S_{\rho}^{\alpha}(\lambda,X)=S_{\rho}^{\beta}(\lambda,X)$ $%
\Longleftrightarrow\alpha=\beta,$

$ii)$ $S_{\rho}^{\alpha}(\lambda,X)=S_{\rho}(\lambda,X)$ $%
\Longleftrightarrow \alpha=1$
\end{corollary}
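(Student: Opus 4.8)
The plan is to read off both equivalences from the inclusion of Theorem~3.2 together with the strictness exhibited in Example~3.3, exactly as Corollary~2.5 was deduced from Theorem~2.2. In $(i)$ the implication $(\Leftarrow)$ is trivial, since $\alpha=\beta$ makes the two classes literally the same, so the content is the converse, which I would prove by contraposition. Assume $\alpha\neq\beta$; after relabelling we may take $0<\alpha<\beta\leq1$. Theorem~3.2 already gives $S_{\rho}^{\alpha}(\lambda,X)\subseteq S_{\rho}^{\beta}(\lambda,X)$, so it suffices to exhibit one sequence lying in $S_{\rho}^{\beta}(\lambda,X)\setminus S_{\rho}^{\alpha}(\lambda,X)$.

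For such a separating sequence I would take the one in Example~3.3 but with its active block inside each $I_{n}$ of length $\lfloor\lambda_{n}^{\gamma}\rfloor$ rather than $\lfloor\sqrt{\lambda_{n}}\rfloor$, where $\gamma$ is any fixed exponent with $\alpha<\gamma<\beta$ (the choice $\gamma=\tfrac12$ recovering Example~3.3). For small $\varepsilon>0$ the set $\{k\in I_{n}:\,|\rho(x_{k},x)-\rho(x,x)|\geq\varepsilon\}$ then has cardinality of order $\lambda_{n}^{\gamma}$; dividing by $\lambda_{n}^{\beta}$ produces the limit $0$ because $\gamma<\beta$ and $\lambda_{n}\to\infty$, whereas dividing by $\lambda_{n}^{\alpha}$ does not tend to $0$ because $\gamma>\alpha$. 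Hence the inclusion $S_{\rho}^{\alpha}(\lambda,X)\subseteq S_{\rho}^{\beta}(\lambda,X)$ is proper, so equality of the two classes forces $\alpha=\beta$, which proves $(i)$. Part $(ii)$ is then the special case $\beta=1$ of $(i)$ together with the notational identity $S_{\rho}(\lambda,X)=S_{\rho}^{1}(\lambda,X)$: if $\alpha=1$ the classes coincide, and if $\alpha<1$ the same construction with $\beta=1$ gives a sequence that is $\lambda\rho$-statistically convergent but not $\lambda\rho$-statistically convergent of order $\alpha$.

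I expect the only step that is not completely mechanical to be this general strictness example. One must verify that the block lengths are admissible, in particular that $\lfloor\lambda_{n}^{\gamma}\rfloor\leq|I_{n}|=\lambda_{n}$ (which holds since $\gamma<1$ and $\lambda_{n}\geq1$), and that the cardinality of the counting set really is comparable to $\lambda_{n}^{\gamma}$ uniformly in $n$ despite the possible overlap of consecutive blocks $I_{n}$; this bookkeeping is carried out as in \c{C}olak~\cite{colak2010}. Once it is in place, the asymptotics $\lambda_{n}^{\gamma}/\lambda_{n}^{\beta}\to0$ and $\lambda_{n}^{\gamma}/\lambda_{n}^{\alpha}\not\to0$ are immediate from $\lambda_{n}\to\infty$, and everything else is a direct consequence of Theorem~3.2 and the monotonicity of $t\mapsto\lambda_{n}^{t}$.
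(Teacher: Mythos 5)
Your proposal is correct and follows exactly the route the paper intends: the inclusion of Theorem~3.2 gives one direction for free, and strictness is settled by a separating example in the style of Example~3.3 (the paper states the corollary without proof, in the same spirit as Corollary~2.5). Your one genuine addition is the right one: Example~3.3 as written only separates orders across the threshold $\tfrac12$, so for arbitrary $\alpha<\beta$ one does need the family of examples with block length of order $\lambda_{n}^{\gamma}$ for some $\gamma\in(\alpha,\beta)$, together with the (flagged) bookkeeping that the counting set in $I_{n}$ really has cardinality comparable to $\lambda_{n}^{\gamma}$, as in \c{C}olak and Bekta\c{s}; with that verified, $\lambda_{n}^{\gamma-\beta}\to0$ and $\lambda_{n}^{\gamma-\alpha}\to\infty$ give both conclusions, and part $(ii)$ is the case $\beta=1$ since $S_{\rho}(\lambda,X)=S_{\rho}^{1}(\lambda,X)$ by definition.
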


For $\alpha\in\left( 0,1\right] $ the inclusion $S_{\rho}^{\alpha}(%
\lambda,X)\subseteq S_{\rho}^{\alpha}(X)$ clearly hold. The following
theorem gives a case where the reverse of this inclusion is also holds.

\begin{theorem}
For $\alpha\in\left( 0,1\right] $ the inclusion$S_{\rho
}^{\alpha}(X)\subseteq S_{\rho}^{\alpha}(\lambda,X)$\textbf{\ }holds\textbf{%
	\ }if 
\begin{equation}
	\lim_{n\rightarrow\infty}\inf\frac{\lambda_{n}^{\alpha}}{n^{\alpha}}>0. 
	\tag{3.1}
\end{equation}
\medskip
\end{theorem}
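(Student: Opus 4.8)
The plan is to compare, at each stage $n$, the number of ``large'' indices lying in the block $I_{n}$ with the number of those lying in the initial segment $\{1,2,\dots,n\}$, and then to absorb the ratio $n^{\alpha}/\lambda_{n}^{\alpha}$ by means of hypothesis (3.1).

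First I would fix a sequence $\left(x_{k}\right)$ with $S_{\rho}^{\alpha}(X)-\lim x_{k}=x$ and an arbitrary $\varepsilon>0$, and abbreviate
\begin{equation*}
A_{n}=\left\{k\leq n:\left\vert\rho\left(x_{k},x\right)-\rho\left(x,x\right)\right\vert\geq\varepsilon\right\},\qquad B_{n}=\left\{k\in I_{n}:\left\vert\rho\left(x_{k},x\right)-\rho\left(x,x\right)\right\vert\geq\varepsilon\right\}.
\end{equation*}
Since $\lambda=\left(\lambda_{n}\right)\in\Lambda$ satisfies $\lambda_{1}=1$ and $\lambda_{n+1}\leq\lambda_{n}+1$, an immediate induction gives $\lambda_{n}\leq n$ for all $n$, so that $I_{n}=[n-\lambda_{n}+1,n]\subseteq\{1,2,\dots,n\}$ and hence $B_{n}\subseteq A_{n}$. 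This yields $\left\vert B_{n}\right\vert\leq\left\vert A_{n}\right\vert$, and therefore
\begin{equation*}
\frac{1}{\lambda_{n}^{\alpha}}\left\vert B_{n}\right\vert\leq\frac{n^{\alpha}}{\lambda_{n}^{\alpha}}\cdot\frac{1}{n^{\alpha}}\left\vert A_{n}\right\vert.
\end{equation*}

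Next, by (3.1) there exist $\delta>0$ and $n_{0}\in\mathbb{N}$ with $\lambda_{n}^{\alpha}/n^{\alpha}\geq\delta$, equivalently $n^{\alpha}/\lambda_{n}^{\alpha}\leq 1/\delta$, for every $n\geq n_{0}$; hence for such $n$,
\begin{equation*}
\frac{1}{\lambda_{n}^{\alpha}}\left\vert B_{n}\right\vert\leq\frac{1}{\delta}\cdot\frac{1}{n^{\alpha}}\left\vert A_{n}\right\vert.
\end{equation*}
Letting $n\rightarrow\infty$, the right-hand side tends to $0$ because $x_{k}\rightarrow x\left(S_{\rho}^{\alpha}(X)\right)$, so the left-hand side does too; thus $S_{\rho}^{\alpha}(\lambda)-\lim x_{k}=x$. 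As $\varepsilon>0$ was arbitrary, this gives $S_{\rho}^{\alpha}(X)\subseteq S_{\rho}^{\alpha}(\lambda,X)$.

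I do not expect any serious obstacle here: the argument is essentially the order-$\alpha$, partial-metric analogue of the classical Mursaleen-type estimate. The two points that merit a line of justification are the inclusion $I_{n}\subseteq\{1,\dots,n\}$, which rests on the structural hypothesis $\lambda\in\Lambda$ (so that $\lambda_{n}\leq n$), and the passage from the $\liminf$ condition (3.1) to a uniform lower bound $\lambda_{n}^{\alpha}/n^{\alpha}\geq\delta$ valid for all sufficiently large $n$; once these are in hand the conclusion follows from the displayed inequality simply by taking limits.
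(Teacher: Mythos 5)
Your argument is correct and is essentially the proof the paper gives: both rest on the inclusion $I_{n}\subseteq\{1,\dots,n\}$, hence $\left\vert B_{n}\right\vert\leq\left\vert A_{n}\right\vert$, followed by inserting the factor $\lambda_{n}^{\alpha}/n^{\alpha}$ and invoking (3.1); you merely make explicit the eventual lower bound $\delta$ that the paper leaves implicit in "taking limit and using (3.1)."
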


\begin{proof}
	For a given $\varepsilon>0,$ we obtain that 
	\begin{equation*}
		\left\vert \left\{ k\leq n:\left\vert \rho\left( x_{k},x\right) -\rho\left(
		x,x\right) \right\vert \geq\varepsilon\right\} \right\vert \geq\left\vert
		\left\{ k\in I_{n}:\left\vert \rho\left( x_{k},x\right) -\rho\left(
		x,x\right) \right\vert \geq\varepsilon\right\} \right\vert .
	\end{equation*}
	Therefore, 
	\begin{align*}
		\underset{n\rightarrow\infty}{\lim}\frac{1}{n^{\alpha}}\left\vert \left\{
		k\leq n:\left\vert \rho\left( x_{k},x\right) -\rho\left( x,x\right)
		\right\vert \geq\varepsilon\right\} \right\vert & \geq\text{ }\underset{%
			n\rightarrow\infty}{\lim}\frac{1}{n^{\alpha}}\left\vert \left\{ k\in
		I_{n}:\left\vert \rho\left( x_{k},x\right) -\rho\left( x,x\right)
		\right\vert \geq\varepsilon\right\} \right\vert \\
		& =\frac{\lambda_{n}^{\alpha}}{n^{\alpha}}.\frac{1}{\lambda_{n}^{\alpha}}%
		\left\vert \left\{ k\in I_{n}:\left\vert \rho\left( x_{k},x\right)
		-\rho\left( x,x\right) \right\vert \geq\varepsilon\right\} \right\vert \geq0
	\end{align*}
	Taking limit as $n\rightarrow\infty$ and using (3.1), we get $%
	x_{k}\rightarrow x\left( S_{\rho}^{\alpha}(X)\right) $ implies $%
	x_{k}\rightarrow x\left( S_{\rho}^{\alpha}(\lambda,X)\right) .$
\end{proof}

\begin{theorem}
Let $\lambda=\left( \lambda_{n}\right) $ and $%
\mu=\left( \mu_{n}\right) $ belong to $\Lambda$ such that $%
\lambda_{n}\leq\mu_{n}$ for all $n\in%
\mathbb{N}
_{n_{0}}$ and let $\alpha$ and $\beta$ be such that $0<\alpha\leq\beta\leq1$%
. Then, the following statements hold:

$\left( i\right) $ If 
\begin{equation}
	\lim_{n\rightarrow\infty}\inf\frac{\lambda_{n}^{\alpha}}{\mu_{n}^{\beta}}>0 
	\tag{3.2}  \label{inf1}
\end{equation}

then $S_{\rho}^{\beta}(\mu,X)\subseteq S_{\rho}^{\alpha}(\lambda,X)$

$\left( ii\right) $ If 
\begin{equation}
	\lim_{n\rightarrow\infty}\frac{\lambda_{n}^{\alpha}}{\mu_{n}^{\beta}}=1\text{
		and }\lim_{n\rightarrow\infty}\frac{\mu_{n}}{\mu_{n}^{\beta}}=1  \tag{3.3}
	\label{inf2}
\end{equation}

then $S_{\rho}^{\alpha}(\lambda,X)=$ $S_{\rho}^{\beta}(\mu,X)$.
\end{theorem}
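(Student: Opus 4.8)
The plan is to exploit the nesting of the index-windows. Since $\lambda,\mu\in\Lambda$ with $\lambda_n\le\mu_n$ for all $n\in\mathbb{N}_{n_0}$, one has $I_n=[n-\lambda_n+1,n]\subseteq J_n:=[n-\mu_n+1,n]$, and $J_n\setminus I_n$ contains exactly $\mu_n-\lambda_n$ integers. Fixing $\varepsilon>0$ and writing $A_n=\{k\in I_n:\left\vert\rho(x_k,x)-\rho(x,x)\right\vert\ge\varepsilon\}$ and $B_n=\{k\in J_n:\left\vert\rho(x_k,x)-\rho(x,x)\right\vert\ge\varepsilon\}$, this immediately yields the two elementary bounds $\left\vert A_n\right\vert\le\left\vert B_n\right\vert$ and $\left\vert B_n\right\vert\le\left\vert A_n\right\vert+(\mu_n-\lambda_n)$, which will drive both inclusions.

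For $(i)$ I would take $(x_k)\in S_\rho^\beta(\mu,X)$ with limit $x$ and estimate
\[\frac{1}{\lambda_n^\alpha}\left\vert A_n\right\vert\leq\frac{1}{\lambda_n^\alpha}\left\vert B_n\right\vert=\frac{\mu_n^\beta}{\lambda_n^\alpha}\cdot\frac{1}{\mu_n^\beta}\left\vert B_n\right\vert.\]
By (3.2), $\liminf_n \lambda_n^\alpha/\mu_n^\beta>0$, so $\mu_n^\beta/\lambda_n^\alpha$ is bounded above, say by $M$, for all large $n$; since $\frac{1}{\mu_n^\beta}\left\vert B_n\right\vert\to0$ and the left-hand side is nonnegative, it tends to $0$, i.e. $(x_k)\in S_\rho^\alpha(\lambda,X)$ with the same limit. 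Hence $S_\rho^\beta(\mu,X)\subseteq S_\rho^\alpha(\lambda,X)$.

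For $(ii)$, the first condition in (3.3), namely $\lambda_n^\alpha/\mu_n^\beta\to1$, forces $\liminf_n \lambda_n^\alpha/\mu_n^\beta>0$, so part $(i)$ already gives $S_\rho^\beta(\mu,X)\subseteq S_\rho^\alpha(\lambda,X)$; it remains to prove the reverse. Starting from $(x_k)\in S_\rho^\alpha(\lambda,X)$ with limit $x$ and using $\left\vert B_n\right\vert\le\left\vert A_n\right\vert+(\mu_n-\lambda_n)$, I would write
\[\frac{1}{\mu_n^\beta}\left\vert B_n\right\vert\leq\frac{\lambda_n^\alpha}{\mu_n^\beta}\cdot\frac{1}{\lambda_n^\alpha}\left\vert A_n\right\vert+\frac{\mu_n-\lambda_n}{\mu_n^\beta}.\]
The first summand tends to $1\cdot0=0$ by hypothesis. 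For the second, since every $\lambda\in\Lambda$ satisfies $\lambda_n\ge1$ and $\alpha\le1$, we have $\lambda_n^\alpha\le\lambda_n$, hence $0\le\frac{\mu_n-\lambda_n}{\mu_n^\beta}\le\frac{\mu_n}{\mu_n^\beta}-\frac{\lambda_n^\alpha}{\mu_n^\beta}\to1-1=0$ by the two conditions in (3.3). Thus $\frac{1}{\mu_n^\beta}\left\vert B_n\right\vert\to0$, so $(x_k)\in S_\rho^\beta(\mu,X)$, and the two classes coincide.

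The only genuinely delicate point is the surplus estimate $(\mu_n-\lambda_n)/\mu_n^\beta\to0$: this is exactly where the inequality $\lambda_n^\alpha\le\lambda_n$ (legitimate because members of $\Lambda$ are bounded below by $1$ and $\alpha\le1$) must be combined with both limit conditions in (3.3); dropping the assumption $\mu_n/\mu_n^\beta\to1$ would leave this surplus term uncontrolled. Everything else is routine bookkeeping with cardinalities of finite sets.
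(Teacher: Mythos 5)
Your proposal is correct and follows essentially the same route as the paper: part $(i)$ compares the counting functions over $I_n\subseteq J_n$ and uses (3.2) to transfer the density bound, and part $(ii)$ splits $J_n$ into $I_n$ and the surplus block of $\mu_n-\lambda_n$ indices, controlling the surplus via $\lambda_n^{\alpha}\le\lambda_n$ together with both limits in (3.3), then invokes $(i)$ for the reverse inclusion. Your write-up is, if anything, slightly cleaner in stating the bound $\mu_n^{\beta}/\lambda_n^{\alpha}\le M$ explicitly, but the argument is the same.
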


\begin{proof}
	$\left( i\right) $ Supposed that $\lambda_{n}\leq\mu_{n}$
	for all $n\in%
	\mathbb{N}
	_{n_{0}}$ and (\ref{inf1}) is satisfied. Since $I_{n}\subset J_{n}$, for
	given $\varepsilon>0,$ we have 
	\begin{equation*}
		\left\{ k\in J_{n}:\left\vert \rho\left( x_{k},x\right) -\rho\left(
		x,x\right) \right\vert \geq\varepsilon\right\} \supset\left\{ k\in
		I_{n}:\left\vert \rho\left( x_{k},x\right) -\rho\left( x,x\right)
		\right\vert \geq\varepsilon\right\}
	\end{equation*}
	
	and so 
	\begin{equation*}
		\frac{1}{\mu_{n}^{\beta}}\left\vert \left\{ k\in J_{n}:\left\vert \rho\left(
		x_{k},x\right) -\rho\left( x,x\right) \right\vert \geq\varepsilon\right\}
		\right\vert \geq\frac{\lambda_{n}^{\alpha}}{\mu_{n}^{\beta}}.\frac{1}{%
			\lambda_{n}^{\alpha}}\left\vert \left\{ k\in I_{n}:\left\vert \rho\left(
		x_{k},x\right) -\rho\left( x,x\right) \right\vert \geq\varepsilon\right\}
		\right\vert
	\end{equation*}
	
	for all $n\in%
	\mathbb{N}
	_{n_{0}}$, where $J_{n}=[n-\mu_{n}+1,n]$. By (\ref{inf1}) as $n\rightarrow
	\infty$\ we get $S_{\rho}^{\beta}(\mu,X)\subseteq
	S_{\rho}^{\alpha}(\lambda,X)$.
	
	$\left( ii\right) $ Supposed that $\left( x_{k}\right) \in S_{\rho
	}^{\alpha}(\lambda,X)$ and (\ref{inf2}) hold. Since $I_{n}\subset J_{n}$,
	for $\varepsilon>0$ we have, for all $n\in%
	\mathbb{N}
	_{n_{0}},$ 
	\begin{align*}
		0 & \leq\frac{1}{\mu_{n}^{\beta}}\left\vert \left\{ k\in J_{n}:\left\vert
		\rho\left( x_{k},x\right) -\rho\left( x,x\right) \right\vert
		\geq\varepsilon\right\} \right\vert \\
		& =\frac{1}{\mu_{n}^{\beta}}\left\vert \left\{ n-\mu_{n}+1\leq
		k<n-\lambda_{n}+1:\left\vert \rho\left( x_{k},x\right) -\rho\left(
		x,x\right) \right\vert \geq\varepsilon\right\} \right\vert \\
		& \text{ \ \ \ \ }+\frac{1}{\mu_{n}^{\beta}}\left\vert \left\{ k\in
		I_{n}:\left\vert \rho\left( x_{k},x\right) -\rho\left( x,x\right)
		\right\vert \geq\varepsilon\right\} \right\vert \\
		& \leq\left( \frac{\mu_{n}-\lambda_{n}}{\mu_{n}^{\beta}}\right) +\frac {1}{%
			\mu_{n}^{\beta}}\left\vert \left\{ k\in I_{n}:\left\vert \rho\left(
		x_{k},x\right) -\rho\left( x,x\right) \right\vert \geq\varepsilon\right\}
		\right\vert \\
		& \leq\left( \frac{\mu_{n}-\lambda_{n}^{\alpha}}{\mu_{n}^{\beta}}\right) +%
		\frac{1}{\mu_{n}^{\beta}}\left\vert \left\{ k\in I_{n}:\left\vert \rho\left(
		x_{k},x\right) -\rho\left( x,x\right) \right\vert \geq\varepsilon\right\}
		\right\vert \\
		& \leq\left( \frac{\mu_{n}}{\mu_{n}^{\beta}}-\frac{\lambda_{n}^{\alpha}}{%
			\mu_{n}^{\beta}}\right) +\frac{1}{\lambda_{n}^{\alpha}}\left\vert \left\{
		k\in I_{n}:\left\vert \rho\left( x_{k},x\right) -\rho\left( x,x\right)
		\right\vert \geq\varepsilon\right\} \right\vert
	\end{align*}
	Thus, since $\underset{n}{\lim}\frac{\mu_{n}}{\mu_{n}^{\beta}}=1$ and $%
	\underset{n}{\lim}\frac{\lambda_{n}^{\alpha}}{\mu_{n}^{\beta}}=1$ by (\ref%
	{inf2}), first part of the sum in the right hand side of last inequality
	tend to $0$ as $n\rightarrow\infty$, because of that $\frac{\mu_{n}}{\mu
		_{n}^{\alpha}}-\frac{\lambda_{n}^{\alpha}}{\mu_{n}^{\beta}}\geq0$ for all $%
	n\in%
	\mathbb{N}
	_{n_{0}}$. This means that $S_{\rho}^{\alpha}(\lambda,X)\subset$ $S_{\rho
	}^{\beta}(\mu,X)$. On the other hand, since (\ref{inf2}) implies (\ref{inf1}%
	) we have $S_{\rho}^{\alpha}(\lambda,X)=$ $S_{\rho}^{\beta}(\mu,X)$.
\end{proof}

By choosing the value of $\beta$, as a result of Theorem 3.6, we can give
the following two results.

\begin{corollary}
Let $\lambda=\left( \lambda_{n}\right) $ and $%
\mu=\left( \mu_{n}\right) $ belong to $\Lambda$ such that $%
\lambda_{n}\leq\mu_{n}$ for all $n\in%
\mathbb{N}
_{n_{0}}$ and (\ref{inf1}) holds. Then the following statements hold:

$\left( i\right) $ $S_{\rho}^{\alpha}(\mu,X)\subseteq S_{\rho}^{\alpha
}(\lambda,X)$ holds for each $\alpha\in(0,1]$.

$\left( ii\right) $ $S_{\rho}(\mu,X)\subseteq S_{\rho}^{\alpha}(\lambda,X)$
holds for each $\alpha\in(0,1]$.
\end{corollary}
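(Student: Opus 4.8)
The plan is to derive both inclusions from Theorem 3.6(i) by specializing the exponent $\beta$, together with the elementary identification $S_{\rho}^{1}(\mu,X)=S_{\rho}(\mu,X)$ (which is just Definition 3.1 with $\alpha=1$, or equivalently Corollary 3.4(ii) with $\lambda$ replaced by $\mu$). No new estimate is needed beyond what Theorem 3.6 already provides; the work is entirely in matching the hypothesis (3.2) to the chosen exponent.

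For part (i), I would apply Theorem 3.6(i) with $\beta=\alpha$. With this choice the standing requirement $0<\alpha\le\beta\le1$ is trivially satisfied, $\lambda_{n}\le\mu_{n}$ for $n\in\mathbb{N}_{n_{0}}$ is assumed, and condition (3.2) reads $\liminf_{n\to\infty}\lambda_{n}^{\alpha}/\mu_{n}^{\alpha}>0$; the conclusion of Theorem 3.6(i) is then exactly $S_{\rho}^{\alpha}(\mu,X)\subseteq S_{\rho}^{\alpha}(\lambda,X)$, which holds for every $\alpha\in(0,1]$. If one prefers to read (3.2) with the (possibly larger) exponent $\beta$ displayed in Theorem 3.6, then since $\mu_{n}\ge1$ and $\alpha\le\beta$ one has $\mu_{n}^{\alpha}\le\mu_{n}^{\beta}$, whence $\lambda_{n}^{\alpha}/\mu_{n}^{\alpha}\ge\lambda_{n}^{\alpha}/\mu_{n}^{\beta}$ and the relevant $\liminf$ is still positive; so (i) needs nothing beyond the hypotheses of Corollary 3.7.

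For part (ii), I would apply Theorem 3.6(i) with $\beta=1$. Then $S_{\rho}^{1}(\mu,X)=S_{\rho}(\mu,X)$ as noted above, the requirement $0<\alpha\le\beta\le1$ holds, and (3.2) becomes $\liminf_{n\to\infty}\lambda_{n}^{\alpha}/\mu_{n}>0$; Theorem 3.6(i) then yields $S_{\rho}(\mu,X)=S_{\rho}^{1}(\mu,X)\subseteq S_{\rho}^{\alpha}(\lambda,X)$ for every $\alpha\in(0,1]$, which is the assertion.

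Both claims are essentially immediate once Theorem 3.6 is available, so I do not anticipate a genuine obstacle; the only delicate point is that (3.2) is not monotone in $\beta$ in the favourable direction — since $\mu_{n}^{\beta}\le\mu_{n}$ for $\beta\le1$, the instance of (3.2) used in (ii) is the strongest member of the family — so ``condition (3.2) holds'' must be understood with respect to the exponent actually selected in each part. As a self-contained alternative, either inclusion can be obtained directly: from $I_{n}\subseteq J_{n}$ we get $\bigl|\{k\in I_{n}:\left\vert\rho(x_{k},x)-\rho(x,x)\right\vert\ge\varepsilon\}\bigr|\le\bigl|\{k\in J_{n}:\left\vert\rho(x_{k},x)-\rho(x,x)\right\vert\ge\varepsilon\}\bigr|$, and dividing by $\lambda_{n}^{\alpha}$ and writing $\tfrac{1}{\lambda_{n}^{\alpha}}=\tfrac{\mu_{n}^{\beta}}{\lambda_{n}^{\alpha}}\cdot\tfrac{1}{\mu_{n}^{\beta}}$ reduces the problem to the boundedness of $\mu_{n}^{\beta}/\lambda_{n}^{\alpha}$, i.e.\ to (3.2), with $\beta=\alpha$ for (i) and $\beta=1$ for (ii).
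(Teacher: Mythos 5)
Your proposal is correct and follows exactly the route the paper intends: the paper gives no separate proof, merely noting that the corollary follows from Theorem 3.6 ``by choosing the value of $\beta$,'' and your specializations $\beta=\alpha$ for (i) and $\beta=1$ for (ii), together with the identification $S_{\rho}^{1}(\mu,X)=S_{\rho}(\mu,X)$, are precisely that argument. Your additional remark clarifying which instance of (3.2) is actually required in each case is a sensible precision the paper leaves implicit, but it does not change the approach.
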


\begin{corollary}
Let $\lambda=\left( \lambda_{n}\right) $ and $%
\mu=\left( \mu_{n}\right) $ belong to $\Lambda$ such that $%
\lambda_{n}\leq\mu_{n}$ for all $n\in%
\mathbb{N}
_{n_{0}}$ and (\ref{inf2}) holds. Then the following statements hold:

$\left( i\right) $ $S_{\rho}^{\alpha}(\lambda,X)\subseteq S_{\rho}^{\alpha
}(\mu,X)$ for each $\alpha\in(0,1]$,

$\left( ii\right) $ $S_{\rho}^{\alpha}(\lambda,X)\subseteq S_{\rho}(\mu,X)$
for each $\alpha\in(0,1]$.
\end{corollary}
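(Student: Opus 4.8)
The plan is to obtain both inclusions directly from Theorem 3.6$(ii)$ by specialising its free parameter $\beta$, in exactly the manner indicated by the remark preceding Corollary 3.7. The ambient hypotheses $\lambda,\mu\in\Lambda$ and $\lambda_{n}\leq\mu_{n}$ for all $n\in\mathbb{N}_{n_{0}}$ are already in force, so in each part the only thing to verify is that condition (\ref{inf2}), read for the chosen value of $\beta$, is precisely what is being assumed.

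For part $(i)$ I would take $\beta=\alpha$. Then (\ref{inf2}) becomes $\lim_{n\rightarrow\infty}\lambda_{n}^{\alpha}/\mu_{n}^{\alpha}=1$ and $\lim_{n\rightarrow\infty}\mu_{n}/\mu_{n}^{\alpha}=1$, and Theorem 3.6$(ii)$ delivers the equality $S_{\rho}^{\alpha}(\lambda,X)=S_{\rho}^{\alpha}(\mu,X)$; in particular the inclusion $S_{\rho}^{\alpha}(\lambda,X)\subseteq S_{\rho}^{\alpha}(\mu,X)$ follows. For part $(ii)$ I would take $\beta=1$. Then the second limit in (\ref{inf2}) is the trivial identity $\lim_{n\rightarrow\infty}\mu_{n}/\mu_{n}=1$, the first becomes $\lim_{n\rightarrow\infty}\lambda_{n}^{\alpha}/\mu_{n}=1$, and Theorem 3.6$(ii)$ gives $S_{\rho}^{\alpha}(\lambda,X)=S_{\rho}^{1}(\mu,X)$. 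Since $S_{\rho}^{1}(\mu,X)=S_{\rho}(\mu,X)$ --- the order-$1$ case of Definition 3.1 is exactly $\lambda\rho$-statistical convergence --- this yields $S_{\rho}^{\alpha}(\lambda,X)\subseteq S_{\rho}(\mu,X)$, as claimed.

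I do not anticipate any real obstacle: all the analytic content sits inside Theorem 3.6$(ii)$, and the corollary is simply its two natural specialisations. The only points deserving an explicit sentence are that for $\beta=1$ the second condition in (\ref{inf2}) holds automatically, so no hidden extra hypothesis is introduced, and that $S_{\rho}^{1}(\mu,X)$ coincides with $S_{\rho}(\mu,X)$; both are immediate from the definitions. One should also record that in each specialisation the constraint $0<\alpha\leq\beta\leq1$ demanded by Theorem 3.6 is satisfied for every $\alpha\in(0,1]$, so the hypotheses of that theorem are genuinely available.
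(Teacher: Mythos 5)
Your proposal is correct and matches the paper's intended argument: the paper gives no separate proof, stating only that the corollary follows from Theorem 3.6 by choosing the value of $\beta$, and your specialisations $\beta=\alpha$ for part $(i)$ and $\beta=1$ for part $(ii)$ (noting the second condition in (\ref{inf2}) becomes trivial and $S_{\rho}^{1}(\mu,X)=S_{\rho}(\mu,X)$) are exactly those choices.
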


Now, we introduce $[V_{\rho}^{\alpha},\lambda]-$summability of order $\alpha$
for the partial metric spaces.

\begin{definition}
For any $\alpha\in\left( 0,1\right] $, the sequence $%
\left( x_{k}\right) $ in the partial metric space $(X,\rho)$ is called
strongly $[V_{\rho}^{\alpha},\lambda]-$summable of order $\alpha$ to $x\in X$
if%
\begin{equation*}
	\lim_{n\rightarrow\infty}\frac{1}{\lambda_{n}^{\alpha}}\sum \limits_{k\in
		I_{n}}\left\vert \rho\left( x_{k},x\right) -\rho\left( x,x\right)
	\right\vert =0\text{.}
\end{equation*}
holds. This is indicated by $[V_{\rho}^{\alpha},\lambda]-\lim x_{k}=x$ and
the set of all sequences with $[V_{\rho}^{\alpha},\lambda]-$summable of
order $\alpha$ is denoted by $[V_{\rho}^{\alpha},\lambda]$.
\end{definition}

\begin{theorem}
For given $\lambda=\left( \lambda_{n}\right) $, $%
\mu=\left( \mu_{n}\right) \in\Lambda,$ assume that $\lambda_{n}\leq\mu_{n}$
for all $n\in%
\mathbb{N}
_{n_{0}}$ and $0<\alpha\leq\beta\leq1$ holds. Then, the following statements
hold:

$\left( i\right) $ If (\ref{inf1}) holds, then $[V_{\rho}^{\beta},\mu]%
\subseteq\lbrack V_{\rho}^{\alpha},\lambda]$.

$\left( ii\right) $ If (\ref{inf2}) holds, then $[V_{\rho}^{\beta},\mu]=[V_{%
	\rho}^{\alpha},\lambda]$.
\end{theorem}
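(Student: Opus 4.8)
The plan is to run the argument of the statistical counterpart, Theorem 3.6, with the counting functionals replaced by the strong Ces\`{a}ro means, using throughout that the summands $\left\vert \rho\left( x_{k},x\right) -\rho\left( x,x\right) \right\vert $ are nonnegative: by the first partial-metric axiom $\rho (x,x)\leq \rho (x,x_{k})=\rho (x_{k},x)$, so this quantity equals $\rho (x_{k},x)-\rho (x,x)\geq 0$. Write $I_{n}=[n-\lambda _{n}+1,n]$ and $J_{n}=[n-\mu _{n}+1,n]$; since $\lambda _{n}\leq \mu _{n}$ for $n\in \mathbb{N}_{n_{0}}$ we have $I_{n}\subseteq J_{n}$, hence $\sum_{k\in I_{n}}\left\vert \rho\left( x_{k},x\right) -\rho\left( x,x\right) \right\vert \leq \sum_{k\in J_{n}}\left\vert \rho\left( x_{k},x\right) -\rho\left( x,x\right) \right\vert $ for $n\in \mathbb{N}_{n_{0}}$. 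For part (i), I would take $\left( x_{k}\right) $ with $[V_{\rho }^{\beta },\mu ]-\lim x_{k}=x$ and, for $n\in \mathbb{N}_{n_{0}}$, estimate
\begin{equation*}
	0\leq \frac{1}{\lambda _{n}^{\alpha }}\sum_{k\in I_{n}}\left\vert \rho\left( x_{k},x\right) -\rho\left( x,x\right) \right\vert \leq \frac{\mu _{n}^{\beta }}{\lambda _{n}^{\alpha }}\cdot \frac{1}{\mu _{n}^{\beta }}\sum_{k\in J_{n}}\left\vert \rho\left( x_{k},x\right) -\rho\left( x,x\right) \right\vert .
\end{equation*}
By (\ref{inf1}) there are $c>0$ and $n_{1}\geq n_{0}$ with $\mu _{n}^{\beta }/\lambda _{n}^{\alpha }\leq 1/c$ for $n\geq n_{1}$, so letting $n\rightarrow \infty $ squeezes the left-hand side to $0$ and gives $[V_{\rho }^{\alpha },\lambda ]-\lim x_{k}=x$, that is, $[V_{\rho }^{\beta },\mu ]\subseteq \lbrack V_{\rho }^{\alpha },\lambda ]$.

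For part (ii), one inclusion comes for free: (\ref{inf2}) implies (\ref{inf1}) (as in Theorem 3.6), so part (i) already yields $[V_{\rho }^{\beta },\mu ]\subseteq \lbrack V_{\rho }^{\alpha },\lambda ]$. (Note that (\ref{inf2}) forces $\beta =1$, as otherwise $\mu _{n}/\mu _{n}^{\beta }=\mu _{n}^{1-\beta }\rightarrow \infty $, and then also $\alpha =1$ and $\lambda _{n}/\mu _{n}\rightarrow 1$; so (ii) is in effect the assertion $[V_{\rho },\lambda ]=[V_{\rho },\mu ]$ whenever $\lambda _{n}/\mu _{n}\rightarrow 1$.) For the reverse inclusion, take $\left( x_{k}\right) $ with $[V_{\rho }^{\alpha },\lambda ]-\lim x_{k}=x$ and split $J_{n}$ into the gap $[n-\mu _{n}+1,\,n-\lambda _{n}]$ and $I_{n}$:
\begin{equation*}
	\frac{1}{\mu _{n}^{\beta }}\sum_{k\in J_{n}}\left\vert \rho\left( x_{k},x\right) -\rho\left( x,x\right) \right\vert =\frac{1}{\mu _{n}^{\beta }}\sum_{k=n-\mu _{n}+1}^{n-\lambda _{n}}\left\vert \rho\left( x_{k},x\right) -\rho\left( x,x\right) \right\vert +\frac{\lambda _{n}^{\alpha }}{\mu _{n}^{\beta }}\cdot \frac{1}{\lambda _{n}^{\alpha }}\sum_{k\in I_{n}}\left\vert \rho\left( x_{k},x\right) -\rho\left( x,x\right) \right\vert .
\end{equation*}
By (\ref{inf2}) the factor $\lambda _{n}^{\alpha }/\mu _{n}^{\beta }$ tends to $1$, and $\frac{1}{\lambda _{n}^{\alpha }}\sum_{k\in I_{n}}\left\vert \rho\left( x_{k},x\right) -\rho\left( x,x\right) \right\vert \rightarrow 0$ by hypothesis, so the second summand vanishes and the matter reduces to the gap term.

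The step I expect to be the main obstacle is precisely this gap term. Unlike in Theorem 3.6(ii), where the corresponding quantity is dominated by the mere cardinality of the gap, here the sizes of the summands over $[n-\mu _{n}+1,\,n-\lambda _{n}]$ genuinely enter. I would imitate the estimate in the proof of Theorem 3.6(ii): the gap contains at most $\mu _{n}-\lambda _{n}\leq \mu _{n}-\lambda _{n}^{\alpha }$ indices (using $\lambda _{n}\geq \lambda _{n}^{\alpha }$, valid since $\lambda _{n}\geq 1$ and $\alpha \leq 1$), so once the summands there are bounded, say by $M$, the gap term is at most $M\left( \frac{\mu _{n}}{\mu _{n}^{\beta }}-\frac{\lambda _{n}^{\alpha }}{\mu _{n}^{\beta }}\right) \rightarrow M(1-1)=0$ by (\ref{inf2}). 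The delicate point is thus to secure such a bound on the summands over the gap --- automatic, for instance, whenever the numbers $\left\vert \rho\left( x_{k},x\right) -\rho\left( x,x\right) \right\vert $ are bounded in $k$. Adding the two summands then gives $[V_{\rho }^{\alpha },\lambda ]-\lim x_{k}=x$, hence $[V_{\rho }^{\alpha },\lambda ]\subseteq \lbrack V_{\rho }^{\beta },\mu ]$; together with the inclusion from part (i), $[V_{\rho }^{\beta },\mu ]=[V_{\rho }^{\alpha },\lambda ]$.
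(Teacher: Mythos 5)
Your proposal follows essentially the same route as the paper: part (i) is the same comparison of the two means via $I_{n}\subseteq J_{n}$ and the factor $\lambda_{n}^{\alpha}/\mu_{n}^{\beta}$, and part (ii) is the same splitting of $J_{n}$ into the gap $[n-\mu_{n}+1,\,n-\lambda_{n}]$ and $I_{n}$, with the gap term estimated by $M\left(\frac{\mu_{n}}{\mu_{n}^{\beta}}-\frac{\lambda_{n}^{\alpha}}{\mu_{n}^{\beta}}\right)$. The one step you leave conditional --- securing a uniform bound $M$ on $\left\vert \rho\left( x_{k},x\right) -\rho\left( x,x\right)\right\vert$ over the gap --- is precisely the step the paper disposes of by simply asserting ``since $(x_{k})$ is bounded, there exists some $M>0$\dots'', an assertion not justified by $[V_{\rho}^{\alpha},\lambda]$-summability alone; so you have not missed any idea actually supplied by the paper, and your explicit flagging of this point (together with the observation that (\ref{inf2}) in fact forces $\alpha=\beta=1$) is if anything more careful than the published argument.
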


\begin{proof}
	$\left( i\right) $ Suppose that $\lambda_{n}\leq\mu_{n}$ for
	all $n\in%
	\mathbb{N}
	_{n_{0}}$. Clearly, $I_{n}\subset J_{n}$ holds so that we may write, for all 
	$n\in%
	\mathbb{N}
	_{n_{0}}$, 
	\begin{equation*}
		\frac{1}{\mu_{n}^{\beta}}\sum \limits_{k\in J_{n}}\left\vert \rho\left(
		x_{k},x\right) -\rho\left( x,x\right) \right\vert \geq\frac{1}{%
			\mu_{n}^{\beta}}\sum \limits_{k\in I_{n}}\left\vert \rho\left(
		x_{k},x\right) -\rho\left( x,x\right) \right\vert \geq0
	\end{equation*}
	This gives 
	\begin{equation*}
		\frac{1}{\mu_{n}^{\beta}}\sum \limits_{k\in J_{n}}\left\vert \rho\left(
		x_{k},x\right) -\rho\left( x,x\right) \right\vert \geq\frac{%
			\lambda_{n}^{\alpha}}{\mu_{n}^{\beta}}\frac{1}{\lambda_{n}^{\alpha}}\sum
		\limits_{k\in I_{n}}\left\vert \rho\left( x_{k},x\right) -\rho\left(
		x,x\right) \right\vert \geq0\text{.}
	\end{equation*}
	\nolinebreak
	
	Hence, as $n\rightarrow\infty$ by (\ref{inf1}) we have $[V_{\rho}^{\beta},%
	\mu]\subset$ $[V_{\rho}^{\alpha},\lambda]$.
	
	$\left( ii\right) $ Suppose that $\left( x_{k}\right) \in\lbrack V_{\rho
	}^{\alpha},\lambda]$ and (\ref{inf2}) holds. Since $\left( x_{k}\right) $ is
	bounded, there exists some $M>0$ such that $\left\vert \rho\left(
	x_{k},x\right) -\rho\left( x,x\right) \right\vert \leq M$ for all $k\in%
	\mathbb{N}
	$. Now, since $\lambda_{n}\leq\mu_{n}$, so is $\frac{1}{\mu_{n}^{\beta}}\leq$
	$\frac{1}{\lambda_{n}^{\alpha}}$, and $I_{n}\subset J_{n}$ for each $n\in%
	\mathbb{N}
	_{n_{0}}$, we have, for every $n\in%
	\mathbb{N}
	_{n_{0}},$ 
	\begin{align*}
		0 & \leq\frac{1}{\mu_{n}^{\beta}}\sum \limits_{k\in J_{n}}\left\vert
		\rho\left( x_{k},x\right) -\rho\left( x,x\right) \right\vert =\frac{1}{%
			\mu_{n}^{\beta}}\sum \limits_{k\in J_{n}-I_{n}}\left\vert \rho\left(
		x_{k},x\right) -\rho\left( x,x\right) \right\vert +\frac{1}{\mu_{n}^{\beta}}%
		\sum \limits_{k\in I_{n}}\left\vert \rho\left( x_{k},x\right) -\rho\left(
		x,x\right) \right\vert \\
		& \leq\left( \frac{\mu_{n}-\lambda_{n}}{\mu_{n}^{\beta}}\right) M+\frac {1}{%
			\mu_{n}^{\beta}}\sum \limits_{k\in I_{n}}\left\vert \rho\left(
		x_{k},x\right) -\rho\left( x,x\right) \right\vert \\
		& \leq\left( \frac{\mu_{n}-\lambda_{n}^{\alpha}}{\mu_{n}^{\beta}}\right) M+%
		\frac{1}{\mu_{n}^{\beta}}\sum \limits_{k\in I_{n}}\left\vert \rho\left(
		x_{k},x\right) -\rho\left( x,x\right) \right\vert \\
		& \leq\left( \frac{\mu_{n}}{\mu_{n}^{\beta}}-\frac{\lambda_{n}^{\alpha}}{%
			\mu_{n}^{\beta}}\right) M+\frac{1}{\lambda_{n}^{\alpha}}\sum \limits_{k\in
			I_{n}}\left\vert \rho\left( x_{k},x\right) -\rho\left( x,x\right) \right\vert
	\end{align*}
	Therefore, $[V_{\rho}^{\alpha},\lambda,p]\subseteq\lbrack
	V_{\rho}^{\beta},\mu,p]$. Since (\ref{inf2}) implies (\ref{inf1}), we have
	the equality\linebreak\ $[V_{\beta}^{\beta},\mu]=[V_{\rho}^{\alpha},\lambda]$
	by $\left( i\right) $.
\end{proof}
Again, by choosing the value of $\beta$, the following two results follows
directly from the last theorem.

\begin{corollary}
Let $\lambda=\left( \lambda_{n}\right) $ and $%
\mu=\left( \mu_{n}\right) $ belong to $\Lambda$ such that $%
\lambda_{n}\leq\mu_{n}$ for all $n\in%
\mathbb{N}
_{n_{0}}$ and (\ref{inf1}) holds. Then the following statements hold:

$\left( i\right) $ $[V_{\rho}^{\alpha},\mu]\subset\lbrack V_{\rho}^{\alpha
},\lambda]$ for each $\alpha\in(0,1]$,

$\left( ii\right) $ $[V,\mu]\subset\lbrack V_{\rho}^{\alpha},\lambda]$ for
each $\alpha\in(0,1]$.
\end{corollary}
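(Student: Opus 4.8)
The plan is to derive both inclusions as immediate specializations of the preceding theorem, the only choice to be made being the value of the exponent $\beta$. For part $(i)$ I would invoke part $(i)$ of that theorem with $\beta=\alpha$: the hypotheses $\lambda,\mu\in\Lambda$ and $\lambda_{n}\le\mu_{n}$ for all $n\in\mathbb{N}_{n_{0}}$ are carried over verbatim, the requirement $0<\alpha\le\beta\le1$ collapses to the trivially true $0<\alpha\le\alpha\le1$, and condition (\ref{inf1}) becomes $\liminf_{n\to\infty}\lambda_{n}^{\alpha}/\mu_{n}^{\alpha}>0$, which is exactly the assumed instance of (\ref{inf1}). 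The conclusion $[V_{\rho}^{\beta},\mu]\subseteq[V_{\rho}^{\alpha},\lambda]$ then reads $[V_{\rho}^{\alpha},\mu]\subseteq[V_{\rho}^{\alpha},\lambda]$, which is $(i)$.

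For part $(ii)$ I would instead take $\beta=1$. Putting $\alpha=1$ in the definition of strong $[V_{\rho}^{\alpha},\lambda]$-summability, one sees that $[V_{\rho}^{1},\mu]$ is precisely the class of sequences in $(X,\rho)$ that are strongly $(V,\mu)$-summable in the partial-metric sense, i.e.\ the space written $[V,\mu]$ in the statement; in this case (\ref{inf1}) has to be read as $\liminf_{n\to\infty}\lambda_{n}^{\alpha}/\mu_{n}>0$. Applying part $(i)$ of the preceding theorem with $\beta=1$ then gives $[V,\mu]=[V_{\rho}^{1},\mu]\subseteq[V_{\rho}^{\alpha},\lambda]$, which is $(ii)$.

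Should one prefer a self-contained argument to a citation, I would rerun the single estimate that drives the proof of the preceding theorem: since $\lambda_{n}\le\mu_{n}$ forces $I_{n}\subseteq J_{n}$ with $J_{n}=[n-\mu_{n}+1,n]$, we have, for all $n\in\mathbb{N}_{n_{0}}$,
\[
\frac{1}{\mu_{n}^{\beta}}\sum_{k\in J_{n}}\left\vert \rho\left(x_{k},x\right)-\rho\left(x,x\right)\right\vert \;\geq\;\frac{\lambda_{n}^{\alpha}}{\mu_{n}^{\beta}}\cdot\frac{1}{\lambda_{n}^{\alpha}}\sum_{k\in I_{n}}\left\vert \rho\left(x_{k},x\right)-\rho\left(x,x\right)\right\vert \;\geq\;0,
\]
and letting $n\to\infty$ while using the positivity of the relevant $\liminf$ forces the $I_{n}$-average on the right to vanish; choosing $\beta=\alpha$ yields $(i)$ and $\beta=1$ yields $(ii)$. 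There is no genuine obstacle here; the only point deserving care is the bookkeeping of conventions — identifying $[V,\mu]$ with the $\alpha=1$ case of $[V_{\rho}^{\alpha},\lambda]$-summability, and reading hypothesis (\ref{inf1}) with the exponent $\beta$ appropriate to each part ($\beta=\alpha$ for $(i)$, $\beta=1$ for $(ii)$, the latter being a strictly stronger demand on the pair $\lambda,\mu$).
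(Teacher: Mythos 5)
Your proposal is correct and matches the paper's own route: the paper derives this corollary directly from the preceding theorem by specializing $\beta=\alpha$ for part $(i)$ and $\beta=1$ (so that $[V_{\rho}^{1},\mu]=[V,\mu]$) for part $(ii)$, exactly as you do, with (\ref{inf1}) read with the corresponding exponent in each case. Your optional self-contained estimate is just the same inequality used in the paper's proof of that theorem, so nothing further is needed.
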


\begin{corollary}
Let $\lambda=\left( \lambda_{n}\right) $ and $%
\mu=\left( \mu_{n}\right) $ belong to $\Lambda$ such that $%
\lambda_{n}\leq\mu_{n}$ for all $n\in%
\mathbb{N}
_{n_{0}}$ and (\ref{inf2}) holds. Then the following statements hold:

$\left( i\right) $ $[V_{\rho}^{\alpha},\lambda]\subseteq\lbrack V_{\rho
}^{\alpha},\mu]$ for each $\alpha\in(0,1]$,

$\left( ii\right) $ $[V_{\rho}^{\alpha},\lambda]\subseteq\lbrack V,\mu]$ for
each $\alpha\in(0,1]$.
\end{corollary}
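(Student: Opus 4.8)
The plan is to derive both inclusions from the last theorem (Theorem~3.10) by specializing the exponent $\beta$, so that no new estimate is needed: the whole analytic content is already carried out in the proof of Theorem~3.10$(ii)$.

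For part $(i)$ I would apply Theorem~3.10$(ii)$ with $\beta=\alpha$. Its standing hypotheses are all in force: $\lambda,\mu\in\Lambda$, $\lambda_{n}\leq\mu_{n}$ for all $n\in\mathbb{N}_{n_{0}}$, $0<\alpha\leq\beta=\alpha\leq1$, and condition~(\ref{inf2}), read with $\beta=\alpha$, holds by assumption. Hence Theorem~3.10$(ii)$ gives the equality $[V_{\rho}^{\alpha},\mu]=[V_{\rho}^{\alpha},\lambda]$, and in particular $[V_{\rho}^{\alpha},\lambda]\subseteq[V_{\rho}^{\alpha},\mu]$. For part $(ii)$ I would first note that in the partial-metric setting $[V,\mu]$ is nothing but the order-one space $[V_{\rho}^{1},\mu]$ of Definition~3.9, i.e.\ strong $(V,\mu)$-summability in $(X,\rho)$, and then apply Theorem~3.10$(ii)$ with $\beta=1$. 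Now the second requirement in~(\ref{inf2}), $\lim_{n}\mu_{n}/\mu_{n}^{\beta}=1$, is automatic, the first requirement $\lim_{n}\lambda_{n}^{\alpha}/\mu_{n}=1$ is the hypothesis, and $0<\alpha\leq\beta=1$, so Theorem~3.10$(ii)$ yields $[V,\mu]=[V_{\rho}^{1},\mu]=[V_{\rho}^{\alpha},\lambda]$, whence $[V_{\rho}^{\alpha},\lambda]\subseteq[V,\mu]$.

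If one preferred a self-contained argument to a citation, both parts follow by repeating the inequality chain in the proof of Theorem~3.10$(ii)$ with $\beta$ replaced by $\alpha$ (respectively by $1$): for $(x_{k})\in[V_{\rho}^{\alpha},\lambda]$, use the boundedness of $(x_{k})$ to fix $M$ with $|\rho(x_{k},x)-\rho(x,x)|\leq M$, split $J_{n}=[n-\mu_{n}+1,n]\supset I_{n}$ into $J_{n}\setminus I_{n}$ and $I_{n}$, bound the sum over $J_{n}\setminus I_{n}$ by $M(\mu_{n}-\lambda_{n})/\mu_{n}^{\beta}\leq M\bigl(\mu_{n}/\mu_{n}^{\beta}-\lambda_{n}^{\alpha}/\mu_{n}^{\beta}\bigr)$, dominate the sum over $I_{n}$ by $\bigl(\lambda_{n}^{\alpha}/\mu_{n}^{\beta}\bigr)\cdot\frac{1}{\lambda_{n}^{\alpha}}\sum_{k\in I_{n}}|\rho(x_{k},x)-\rho(x,x)|$, and let $n\to\infty$ invoking~(\ref{inf2}) together with $\lambda_{n}\leq\mu_{n}$. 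The only point that needs care is bookkeeping: keeping straight which of $\alpha$ and $1$ plays the role of $\beta$ in each part, and checking that the monotonicity constraint $\alpha\leq\beta\leq1$ and the relevant instance of~(\ref{inf2}) are consistent with that choice. I therefore anticipate no real obstacle.
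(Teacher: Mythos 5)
Your proposal is correct and takes essentially the same route as the paper, which obtains this corollary directly from Theorem~3.10$(ii)$ by specializing the exponent: $\beta=\alpha$ for part $(i)$ and $\beta=1$ (so that $[V,\mu]=[V_{\rho}^{1},\mu]$ and the condition $\lim_{n}\mu_{n}/\mu_{n}^{\beta}=1$ is automatic) for part $(ii)$. Your optional self-contained argument merely re-runs the proof of Theorem~3.10$(ii)$ with those choices, so nothing further is needed.
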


\begin{theorem}
Let $\alpha,\beta\in(0,1]$ be real numbers such that $%
\alpha\leq\beta$, and $\lambda=\left( \lambda_{n}\right) $, $\mu=\left(
\mu_{n}\right) \in\Lambda$ such that $\lambda_{n}\leq\mu_{n}$ for all $n\in%
\mathbb{N}
_{n_{0}}$. Then the following statements hold:

$\left( i\right) $ Let (\ref{inf1}) holds, then if a sequence is $[V_{\rho
}^{\beta},\mu]$-summable of order $\beta$, to $x\in X$, then it is $S_{\rho
}^{\alpha}(\lambda,X)$-statistically convergent of order $\alpha$, to $x$.

$\left( ii\right) $ Let (\ref{inf2}) holds, then if a sequence is $S_{\rho
}^{\alpha}(\lambda,X)$-statistically convergent of order $\alpha$, to $x\in
X $, then it is $[V_{\rho}^{\beta},\mu]$-summable of order $\beta$, to $x$.
\end{theorem}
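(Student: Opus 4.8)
The plan is to establish the two implications separately, in each case comparing the strong-$(V,\mu)$ average of order $\beta$ with the counting quantity governing $\lambda\rho$-statistical convergence of order $\alpha$; this is the same splitting-and-squeezing scheme used earlier in this section (in particular in Theorems~3.6 and 3.10) and in Theorem~2.7. Throughout, write $I_n=[n-\lambda_n+1,n]$ and $J_n=[n-\mu_n+1,n]$, so that $\lambda_n\le\mu_n$ for $n\in\mathbb{N}_{n_0}$ forces $I_n\subseteq J_n$, and use $|I_n|=\lambda_n$, $|J_n|=\mu_n$, and $\lambda_n^{\alpha}\le\lambda_n$ (the last because $0<\alpha\le1$ and $\lambda_n\ge1$).

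For part (i), assume $[V_\rho^{\beta},\mu]-\lim x_k=x$, i.e.\ $\frac{1}{\mu_n^{\beta}}\sum_{k\in J_n}|\rho(x_k,x)-\rho(x,x)|\to 0$. Fix $\varepsilon>0$. Since $I_n\subseteq J_n$,
\[
\sum_{k\in J_n}|\rho(x_k,x)-\rho(x,x)|\ \ge\ \sum_{\substack{k\in I_n\\ |\rho(x_k,x)-\rho(x,x)|\ge\varepsilon}}|\rho(x_k,x)-\rho(x,x)|\ \ge\ \varepsilon\,\bigl|\{k\in I_n:|\rho(x_k,x)-\rho(x,x)|\ge\varepsilon\}\bigr|,
\]
so that
\[
\frac{1}{\lambda_n^{\alpha}}\bigl|\{k\in I_n:|\rho(x_k,x)-\rho(x,x)|\ge\varepsilon\}\bigr|\ \le\ \frac{1}{\varepsilon}\cdot\frac{\mu_n^{\beta}}{\lambda_n^{\alpha}}\cdot\frac{1}{\mu_n^{\beta}}\sum_{k\in J_n}|\rho(x_k,x)-\rho(x,x)|.
\]
By \eqref{inf1} the factor $\mu_n^{\beta}/\lambda_n^{\alpha}$ stays bounded while the last average tends to $0$; letting $n\to\infty$ gives $x_k\to x\,(S_\rho^{\alpha}(\lambda,X))$.

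For part (ii), assume $S_\rho^{\alpha}(\lambda)-\lim x_k=x$. As in the proof of Theorem~3.10(ii), use that $(x_k)$ is bounded, say $|\rho(x_k,x)-\rho(x,x)|\le M$ for all $k$. Fix $\varepsilon>0$. Splitting $J_n=(J_n\setminus I_n)\cup I_n$, and then the sum over $I_n$ according to whether $|\rho(x_k,x)-\rho(x,x)|\ge\varepsilon$, and using $|J_n\setminus I_n|=\mu_n-\lambda_n\le\mu_n-\lambda_n^{\alpha}$, one obtains
\[
\frac{1}{\mu_n^{\beta}}\sum_{k\in J_n}|\rho(x_k,x)-\rho(x,x)|\ \le\ \Bigl(\frac{\mu_n}{\mu_n^{\beta}}-\frac{\lambda_n^{\alpha}}{\mu_n^{\beta}}\Bigr)M\ +\ M\,\frac{\lambda_n^{\alpha}}{\mu_n^{\beta}}\cdot\frac{1}{\lambda_n^{\alpha}}\bigl|\{k\in I_n:|\rho(x_k,x)-\rho(x,x)|\ge\varepsilon\}\bigr|\ +\ \varepsilon\,\frac{\lambda_n}{\mu_n^{\beta}}.
\]
Now let $n\to\infty$: by \eqref{inf2} the first term tends to $(1-1)M=0$; in the second term $\lambda_n^{\alpha}/\mu_n^{\beta}\to1$ while the counting factor tends to $0$ by hypothesis, so that term tends to $0$; and $\lambda_n/\mu_n^{\beta}\le\mu_n/\mu_n^{\beta}\to1$, so the third term has $\limsup\le\varepsilon$. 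Hence $\limsup_n\frac{1}{\mu_n^{\beta}}\sum_{k\in J_n}|\rho(x_k,x)-\rho(x,x)|\le\varepsilon$, and since $\varepsilon>0$ was arbitrary the limit is $0$, i.e.\ $[V_\rho^{\beta},\mu]-\lim x_k=x$.

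The only genuinely delicate point is the $I_n$-tail in part (ii): estimating the portion of the sum over those $k\in I_n$ with $|\rho(x_k,x)-\rho(x,x)|<\varepsilon$ by $\varepsilon\lambda_n$ and dividing by $\mu_n^{\beta}$ produces $\varepsilon\lambda_n/\mu_n^{\beta}$, which stays bounded precisely because of the second requirement $\mu_n/\mu_n^{\beta}\to1$ in \eqref{inf2}; and, exactly as in Theorem~3.10(ii), the middle term forces one to use boundedness of $(x_k)$. These two features are why \eqref{inf1} on its own does not yield the implication in part (ii). Everything else is the routine estimation already carried out in Theorems~2.7 and 3.10.
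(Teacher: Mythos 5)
Your proposal is correct and follows essentially the same route as the paper's own proof: part (i) is the same lower bound $\sum_{k\in J_n}\left\vert \rho\left( x_{k},x\right) -\rho\left( x,x\right) \right\vert \geq \varepsilon\left\vert \left\{ k\in I_{n}:\left\vert \rho\left( x_{k},x\right) -\rho\left( x,x\right) \right\vert \geq\varepsilon\right\} \right\vert$ combined with (3.2), and part (ii) is the same decomposition of $J_n$ into $J_n\setminus I_n$ and $I_n$ with the boundedness constant $M$ and condition (3.3). The only difference is cosmetic: you carry out the final step via an explicit $\limsup\leq\varepsilon$ argument for the term $\varepsilon\lambda_n/\mu_n^{\beta}$, which the paper leaves implicit in "using (3.3)".
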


\begin{proof}
	$\left( i\right) $ For any sequence $\left( x_{k}\right)
	\subset X$ and $\varepsilon>0$, we have%
	\begin{align*}
		& \sum_{k\in J_{n}}\left\vert \rho\left( x_{k},x\right) -\rho\left(
		x,x\right) \right\vert \\
		& =\sum_{\substack{ k\in J_{n}  \\ \left\vert \rho\left( x_{k},x\right)
				-\rho\left( x,x\right) \right\vert \geq\varepsilon}}\left\vert \rho\left(
		x_{k},x\right) -\rho\left( x,x\right) \right\vert +\sum_{\substack{ k\in
				J_{n}  \\ \left\vert \rho\left( x_{k},x\right) -\rho\left( x,x\right)
				\right\vert <\varepsilon}}\left\vert \rho\left( x_{k},x\right) -\rho\left(
		x,x\right) \right\vert \\
		& \geq\sum_{\substack{ k\in I_{n}  \\ \left\vert \rho\left( x_{k},x\right)
				-\rho\left( x,x\right) \right\vert \geq\varepsilon}}\left\vert \rho\left(
		x_{k},x\right) -\rho\left( x,x\right) \right\vert +\sum_{\substack{ k\in
				I_{n}  \\ \left\vert \rho\left( x_{k},x\right) -\rho\left( x,x\right)
				\right\vert <\varepsilon}}\left\vert \rho\left( x_{k},x\right) -\rho\left(
		x,x\right) \right\vert \\
		& \geq\sum_{\substack{ k\in I_{n}  \\ \left\vert \rho\left( x_{k},x\right)
				-\rho\left( x,x\right) \right\vert \geq\varepsilon}}\left\vert \rho\left(
		x_{k},x\right) -\rho\left( x,x\right) \right\vert \\
		& \geq\left\vert \left\{ k\in I_{n}:\left\vert \rho\left( x_{k},x\right)
		-\rho\left( x,x\right) \right\vert \geq\varepsilon\right\} \right\vert
		.\varepsilon\geq0.
	\end{align*}
	
	and so that 
	\begin{align*}
		\frac{1}{\mu_{n}^{\beta}}\sum_{k\in J_{n}}\left\vert \rho\left(
		x_{k},x\right) -\rho\left( x,x\right) \right\vert & \geq\frac{1}{\mu
			_{n}^{\beta}}\left\vert \left\{ k\in I_{n}:\left\vert \rho\left(
		x_{k},x\right) -\rho\left( x,x\right) \right\vert \geq\varepsilon\right\}
		\right\vert .\varepsilon \\
		& \geq\frac{\lambda_{n}^{\alpha}}{\mu_{n}^{\beta}}\frac{1}{\lambda
			_{n}^{\alpha}}\left\vert \left\{ k\in I_{n}:\left\vert \rho\left(
		x_{k},x\right) -\rho\left( x,x\right) \right\vert \geq\varepsilon\right\}
		\right\vert .\varepsilon\geq0.
	\end{align*}
	Since (\ref{inf1}) holds, it means that if $\left( x_{k}\right) $ is $%
	[V_{\rho}^{\beta},\mu]$-summable of order $\beta$, to $x$, then it is $%
	S_{\rho}^{\alpha}(\lambda,X)$-statistically convergent of order $\alpha$, to 
	$x$.
	
	$\left( ii\right) $ Suppose that $S_{\rho}^{\beta}(\lambda)-\lim x_{k}=x$.
	Consequently, since $(x_{k})$ is bounded there exists some $M>0$ such that $%
	\left\vert \rho\left( x_{k},x\right) -\rho\left( x,x\right) \right\vert \leq
	M$ for all $k\in%
	\mathbb{N}
	$. Then, for every $\varepsilon>0$, we have
	
	\begin{align*}
		0 & \leq\frac{1}{\mu_{n}^{\beta}}\sum \limits_{k\in J_{n}}\left\vert
		\rho\left( x_{k},x\right) -\rho\left( x,x\right) \right\vert =\frac{1}{%
			\mu_{n}^{\beta}}\sum \limits_{k\in J_{n}-I_{n}}\left\vert \rho\left(
		x_{k},x\right) -\rho\left( x,x\right) \right\vert +\frac{1}{\mu_{n}^{\beta}}%
		\sum \limits_{k\in I_{n}}\left\vert \rho\left( x_{k},x\right) -\rho\left(
		x,x\right) \right\vert \\
		& \leq\left( \frac{\mu_{n}-\lambda_{n}}{\mu_{n}^{\beta}}\right) M+\frac {1}{%
			\mu_{n}^{\beta}}\sum \limits_{k\in I_{n}}\left\vert \rho\left(
		x_{k},x\right) -\rho\left( x,x\right) \right\vert \\
		& \leq\left( \frac{\mu_{n}-\lambda_{n}^{\alpha}}{\mu_{n}^{\beta}}\right) M+%
		\frac{1}{\mu_{n}^{\beta}}\sum \limits_{k\in I_{n}}\left\vert \rho\left(
		x_{k},x\right) -\rho\left( x,x\right) \right\vert +\frac{1}{\mu_{n}^{\beta}}%
		\sum \limits_{\substack{ k\in I_{n}  \\ \left\vert \varphi\left(
				x_{k}-l\right) \right\vert <\varepsilon}}\left\vert \rho\left(
		x_{k},x\right) -\rho\left( x,x\right) \right\vert \\
		& \leq\left( \frac{\mu_{n}}{\mu_{n}^{\beta}}-\frac{\lambda_{n}^{\alpha}}{%
			\mu_{n}^{\beta}}\right) M+\frac{M}{\lambda_{n}^{\alpha}}\left\vert \left\{
		k\in I_{n}:\left\vert \rho\left( x_{k},x\right) -\rho\left( x,x\right)
		\right\vert \geq\varepsilon\right\} \right\vert +\frac{\lambda_{n}}{\mu
			_{n}^{\beta}}\varepsilon
	\end{align*}
	for all $n\in%
	\mathbb{N}
	_{n_{0}}$. Using (\ref{inf2}) we obtain that $[V^{\beta},\lambda]-\lim
	x_{k}=x$, whenever $S_{\rho}^{\alpha}(\lambda)-\lim x_{k}=x$.
\end{proof}

Similarly, by choosing the value of $\beta$, we obtain next results.

\begin{corollary}
Let $\lambda=\left( \lambda_{n}\right) $ and $%
\mu=\left( \mu_{n}\right) $ belong to $\Lambda$ such that $%
\lambda_{n}\leq\mu_{n}$ for all $n\in%
\mathbb{N}
_{n_{0}}$ and (\ref{inf1}) holds. Then the following statements hold:

$\left( i\right) $ $[V_{\rho}^{\alpha},\mu]\subset S_{\rho}^{\beta}(\lambda)$
for each $\alpha\in(0,1]$,

$\left( ii\right) $ $[V,\mu]\subset S_{\rho}^{\alpha}(\lambda)$ for each $%
\alpha\in(0,1]$.
\end{corollary}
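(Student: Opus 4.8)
The plan is to read off both inclusions from the preceding theorem (Theorem~3.13), the only extra ingredient being the order monotonicity $S_{\rho}^{\alpha}(\lambda,X)\subseteq S_{\rho}^{\beta}(\lambda,X)$ for $\alpha\le\beta$ established in Theorem~3.2.

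I would settle $(ii)$ first, since it is the bare specialization $\beta=1$ of part $(i)$ of Theorem~3.13. Because $\mu_{n}^{1}=\mu_{n}$, a sequence is ``$[V_{\rho}^{1},\mu]$-summable of order $1$ to $x$'' precisely when $\frac{1}{\mu_{n}}\sum_{k\in J_{n}}\left\vert \rho(x_{k},x)-\rho(x,x)\right\vert \rightarrow 0$, i.e.\ when it belongs to $[V,\mu]$; and with $\beta=1$ condition $(\ref{inf1})$ reads $\liminf_{n}\lambda_{n}^{\alpha}/\mu_{n}>0$. Theorem~3.13$(i)$ is applicable because $\alpha\le 1$, and it states exactly that every such sequence is $S_{\rho}^{\alpha}(\lambda,X)$-statistically convergent of order $\alpha$ to the same $x$; that is, $[V,\mu]\subseteq S_{\rho}^{\alpha}(\lambda)$.

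For $(i)$ the tidiest route is to re-run the one-line estimate from the proof of Theorem~3.13 with the exponents shifted. Since $\lambda_{n}\le\mu_{n}$ gives $I_{n}\subseteq J_{n}$, for every $\varepsilon>0$ and all $n\in\mathbb{N}_{n_{0}}$ we have
\begin{equation*}
	\frac{1}{\mu_{n}^{\alpha}}\sum_{k\in J_{n}}\left\vert \rho(x_{k},x)-\rho(x,x)\right\vert \geq \varepsilon\,\frac{\lambda_{n}^{\beta}}{\mu_{n}^{\alpha}}\cdot\frac{1}{\lambda_{n}^{\beta}}\left\vert \left\{ k\in I_{n}:\left\vert \rho(x_{k},x)-\rho(x,x)\right\vert \geq\varepsilon\right\} \right\vert \geq 0 .
\end{equation*}
If $(x_{k})\in[V_{\rho}^{\alpha},\mu]$ with limit $x$, the left-hand side tends to $0$; moreover $\lambda_{n},\mu_{n}\geq 1$ and $\alpha\leq\beta$ force $\lambda_{n}^{\beta}\geq\lambda_{n}^{\alpha}$ and $\mu_{n}^{\alpha}\leq\mu_{n}^{\beta}$, so $\liminf_{n}\lambda_{n}^{\beta}/\mu_{n}^{\alpha}\geq\liminf_{n}\lambda_{n}^{\alpha}/\mu_{n}^{\beta}>0$ by $(\ref{inf1})$. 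Hence $\frac{1}{\lambda_{n}^{\beta}}\left\vert \left\{ k\in I_{n}:\left\vert \rho(x_{k},x)-\rho(x,x)\right\vert \geq\varepsilon\right\} \right\vert \rightarrow 0$, i.e.\ $x_{k}\rightarrow x\,(S_{\rho}^{\beta}(\lambda))$, which yields $[V_{\rho}^{\alpha},\mu]\subseteq S_{\rho}^{\beta}(\lambda)$. Alternatively, since $(\ref{inf1})$ together with $\mu_{n}^{\alpha}\le\mu_{n}^{\beta}$ gives $\liminf_{n}\lambda_{n}^{\alpha}/\mu_{n}^{\alpha}>0$, Theorem~3.13 applied with both orders equal to $\alpha$ gives $[V_{\rho}^{\alpha},\mu]\subseteq S_{\rho}^{\alpha}(\lambda)$, and then Theorem~3.2 promotes order $\alpha$ to order $\beta$.

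No analytic difficulty is involved: everything collapses to the inequality already displayed in the proof of Theorem~3.13. The single point requiring care is the bookkeeping of exponents — Theorem~3.13 attaches the larger order to the $[V_{\rho},\mu]$ space, so in $(i)$, where the $[V_{\rho},\mu]$ space carries the smaller order $\alpha$, the theorem cannot be quoted verbatim; one must either redo the estimate above or pass through the order inclusion of Theorem~3.2 after transferring $(\ref{inf1})$ to the exponent pair $(\alpha,\alpha)$ via $\mu_{n}^{\alpha}\le\mu_{n}^{\beta}$. The strict inclusions recorded by the symbol ``$\subset$'' in the statement would be witnessed by a sequence constructed exactly as in Example~3.3.
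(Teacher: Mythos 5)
Your proposal is correct and takes essentially the same route as the paper, whose entire ``proof'' is the remark that the corollary follows from Theorem 3.13 by choosing the value of $\beta$ (namely $\beta=\alpha$ for item $(i)$ and $\beta=1$ for item $(ii)$, with condition (3.2) read in the correspondingly specialized form), which is exactly what you do. Your added care in item $(i)$ — observing that, as printed, the smaller order sits on the $[V_{\rho}^{\alpha},\mu]$ side so Theorem 3.13 cannot be quoted verbatim, and repairing this either by redoing the one-line estimate or by applying Theorem 3.13 with the pair $(\alpha,\alpha)$ (legitimate since $\mu_{n}\geq1$ gives $\mu_{n}^{\alpha}\leq\mu_{n}^{\beta}$, so (3.2) transfers) followed by the monotonicity $S_{\rho}^{\alpha}(\lambda,X)\subseteq S_{\rho}^{\beta}(\lambda,X)$ of Theorem 3.2 — is a sound treatment of a detail the paper glosses over.
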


\begin{corollary}
Let $\lambda=\left( \lambda_{n}\right) $ and $%
\mu=\left( \mu_{n}\right) $ belong to $\Lambda$ such that $%
\lambda_{n}\leq\mu_{n}$ for all $n\in%
\mathbb{N}
_{n_{0}}$ and (\ref{inf2}) holds. Then the following statements hold:

$\left( i\right) $ $S_{\rho}^{\alpha}(\lambda)\subset\lbrack V_{\rho
}^{\alpha},\mu]$ for each $\alpha\in(0,1]$,

$\left( ii\right) $ $S_{\rho}^{\alpha}(\lambda)\subset\lbrack V,\mu]$ for
each $\alpha\in(0,1]$.
\end{corollary}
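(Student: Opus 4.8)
The plan is to read both inclusions off part $(ii)$ of Theorem 3.13 — the theorem relating $S_{\rho}^{\alpha}(\lambda,X)$-statistical convergence of order $\alpha$ to $[V_{\rho}^{\beta},\mu]$-summability of order $\beta$ — simply by specializing its free parameter $\beta$. No new estimate is required: the substantive work (splitting the sum over $J_{n}$ into its parts on $J_{n}\setminus I_{n}$ and on $I_{n}$, and using boundedness of the sequence) was already carried out in that proof.

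For $(i)$, I fix $\alpha\in(0,1]$ and apply Theorem 3.13 with $\beta:=\alpha$. The constraint $\alpha\leq\beta$ is then vacuous, and condition $(\ref{inf2})$, read with $\beta=\alpha$, is precisely the standing hypothesis of the corollary, namely $\lim_{n}\lambda_{n}^{\alpha}/\mu_{n}^{\alpha}=1$ together with $\lim_{n}\mu_{n}/\mu_{n}^{\alpha}=1$. Part $(ii)$ of the theorem then asserts that any sequence which is $S_{\rho}^{\alpha}(\lambda,X)$-statistically convergent of order $\alpha$ to some $x\in X$ is strongly $[V_{\rho}^{\beta},\mu]$-summable of order $\beta$ to $x$; with $\beta=\alpha$ the target class is $[V_{\rho}^{\alpha},\mu]$, whence $S_{\rho}^{\alpha}(\lambda)\subseteq[V_{\rho}^{\alpha},\mu]$. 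For $(ii)$, I again fix $\alpha\in(0,1]$ but now take $\beta:=1$ (so $\alpha\leq\beta$); Theorem 3.13$(ii)$ then yields strong $[V_{\rho}^{1},\mu]$-summability of order $1$, i.e. membership in $[V,\mu]$, and hence $S_{\rho}^{\alpha}(\lambda)\subseteq[V,\mu]$.

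The only step with any real content — the main obstacle, such as it is — is checking that condition $(\ref{inf2})$ with $\beta=1$, namely $\lim_{n}\lambda_{n}^{\alpha}/\mu_{n}=1$ (its companion $\lim_{n}\mu_{n}/\mu_{n}=1$ being automatic), is still delivered by the single hypothesis of the corollary. This follows at once by writing $\lambda_{n}^{\alpha}/\mu_{n}=(\lambda_{n}^{\alpha}/\mu_{n}^{\alpha})\,(\mu_{n}^{\alpha}/\mu_{n})$ and noting that $\mu_{n}^{\alpha}/\mu_{n}$ is the reciprocal of $\mu_{n}/\mu_{n}^{\alpha}\to1$, so the product tends to $1$. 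Everything else is bookkeeping: matching the two instances of $(\ref{inf2})$ to the stated hypothesis, and recognizing that $[V_{\rho}^{\beta},\mu]$ collapses to $[V_{\rho}^{\alpha},\mu]$ when $\beta=\alpha$ and to $[V,\mu]$ when $\beta=1$. I would also record in passing that the proof of Theorem 3.13 used boundedness of the sequence, so that standing assumption remains in force here, and that the symbol ``$\subset$'' in the statement should be read non-strictly (as elsewhere in the paper), so that no separating example is needed.
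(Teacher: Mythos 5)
Your proposal is correct and follows exactly the route the paper intends: the corollary is obtained from Theorem 3.13$(ii)$ by specializing $\beta=\alpha$ for part $(i)$ and $\beta=1$ for part $(ii)$, the paper itself offering no further argument beyond ``by choosing the value of $\beta$''. Your extra check that condition (\ref{inf2}) read with $\beta=\alpha$ also delivers the $\beta=1$ instance (via $\lambda_{n}^{\alpha}/\mu_{n}=(\lambda_{n}^{\alpha}/\mu_{n}^{\alpha})(\mu_{n}^{\alpha}/\mu_{n})$) is a sound and welcome clarification of a point the paper leaves implicit.
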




\begin{thebibliography}{3}
\bibitem{altin2015}
Y. ALTIN, H. ALTINOK, and R. \c{C}OLAK, \emph{Statistical convergence of
	order $\alpha $ for difference sequences}, Quaestiones Mathematicae, \textbf
{38},  \emph{4}, 505--514, 2015. doi: 10.2989/16073606.2014.981685.

\bibitem{bilalov2015}
B. BILALOV, T. NAZAROVA,  \emph{On statistical convergence in metric
	space, Journal of Mathematics Research}, \textbf{7},  \emph{1}, 37--43, 2015. doi:
10.5539/jmr.v7n1p37.

\bibitem{connor1988}
J. S. CONNOR,  \emph{The statistical and strong $p-$Ces\`{a}ro convergence of
	sequences}, Analysis, \textbf{8}, \emph{1-2}, 47--63, 1988. doi: 10.1524/anly.1988.8.12.47.

\bibitem{colak2010}
R. \c{C}OLAK,  \emph{Statistical convergence of order $\alpha$}, Modern Methods in Analysis and Its Applications, edited by M. Mursaleen,
121-129. New Delhi, India: Anamaya Publication, 2010.

\bibitem{colak2011}
R. \c{C}OLAK,  \emph{$\lambda$-statistical convergence}, Paper presented at
the annual meeting fot the Conference on summability and applications,
Commerce University,Istanbul, May 12-13, 2011.

\bibitem{colakbektas2011}
R. \c{C}OLAK and \c{C}. A. BEKTA\c{S},  \emph{$\lambda$-statistical
	convergence of order $\alpha$}, Acta Mathematica Scientia, \textbf{31},  \emph{3},
953--959, 2011. doi: 10.1016/S0252-9602(11)60288-9.

\bibitem{fast1951}
H. FAST,  \emph{Sur la convergence statistique}, Colloquium
Mathematicae, \textbf{2},  \emph{3-4}, 241--244, 1951. doi: 10.4064/cm-2-3-4-241-244.

\bibitem{Fridy1985}
J. A. FRIDY,  \emph{On statistical convergence}, Analysis, \textbf{5},  \emph{4},
301--313. doi: 10.1524/anly.1985.5.4.301.

\bibitem{gadjiev2002}
A. D. GADJIEV and C. ORHAN,  \emph{Some approximation theorems via
	statistical convergence}, Rocky Mountain Journal of Mathematics, \textbf{32},  \emph{1}, 129-138, 2002. doi: 10.1216/rmjm/1030539612.

\bibitem{kayan2018}
E. KAYAN, R. \c{C}OLAK and Y. ALTIN,  \emph{$d$-statistical convergence of
	order $\alpha$ and $d$-statistical boundedness of order $\alpha$ in metric
	spaces}, University Politehnica of Bucharest The Scientific Bulletin
Series A, \textbf{80},  \emph{2}, 229--238, 2018.

\bibitem{kucukaslan2014}
M. K\"{U}\c{C}\"{U}KASLAN, U. DE\u{G}ER and O. DOVGOSHEY,  \emph{On the
	statistical convergence of metric-valued sequences}, Ukrainian
Mathematical Journal, \textbf{66},  \emph{5}, 796--805, 2018. doi: 10.1007/s11253-014-0974-z.

\bibitem{leindler1965}
L. LEINDER,  \emph{\"{U}ber die verallgemeinerte de la Vall\'{e}%
	e-Poussinsche summierbarkeit allgemeiner Orthogonalreihen}, Acta
Mathematica Academiae Scientiarum Hungarica, \textbf{16},  \emph{3-4}, 375--387, 1965. doi:
10.1007/BF01904844.

\bibitem{matthews1994}
S. G. MATTHEWS,  \emph{Partial metric topology}, Annals of the New
York Academy of Sciences, \textbf{728}, 183-197, 1994. doi:
10.1111/j.1749-6632.1994.tb44144.x.

\bibitem{mursaleen2000}
M. MURSALEEN,  \emph{$\lambda$-statistical convergence}, Mathematica
Slovaca, \textbf{50},  \emph{1}, 111--115, 2000.

\bibitem{Niven1966}
I. NIVEN and H. S. ZUCKERMAN,  \emph{An introduction to the Theory
	of Numbers: Fourth Edition}, New York: John Wiley \& Sons, 1980.

\bibitem{Nuray2022}
F. NURAY,  \emph{Statistical convergence in partial metric spaces},
Korean Journal of Mathematics, \textbf{30},  \emph{1}, 155--160, 2022. doi:
10.11568/kjm.2022.30.1.155.

\bibitem{schoenberg1959}
I. J. SCHOENBERG,  \emph{The integrability of certain functions and related
	summability methods}, The American Mathematical Monthly, \textbf{66}, 361--375, 1959. doi: 10.1080/00029890.1959.11989303.

\bibitem{steinhaus1951}
H. STEINHAUS,  \emph{Sur la convergence ordinaire et la convergence
	asymptotique}, Colloquium Mathematicum, \textbf{2}, 73-74, 1951.

\bibitem{zygmund1979}
A. ZYGMUND,  \emph{Trigonometric Series}, Cambridge: Cambridge
University Press, 1979.
\end{thebibliography}

\end{document}